\definecolor{bluegreen2}{RGB}{0, 85, 127}
\setlist{itemsep=1.5mm}
\theoremstyle{plain}
\newtheorem{theorem}{Theorem}[section]
\newtheorem{theoremintro}{Theorem}
\newtheorem{cor}[theorem]{Corollary}
\theoremstyle{definition}
\newtheorem{defi}[theorem]{Definition}
\newtheorem{defi_intro}[theoremintro]{Definition}
\newtheorem{ex}[theorem]{Example}
\theoremstyle{remark}
\newtheorem{remark}[theorem]{Remark}
\renewcommand{\AA}{\mathds{A}}
\renewcommand{\O}{\mathcal{O}}
\renewcommand{\L}{\mathcal{L}}
\newcommand{\CC}{\mathds{C}}
\newcommand{\NN}{\mathds{N}}
\newcommand{\ZZ}{\mathds{Z}}
\newcommand{\QQ}{\mathds{Q}}
\newcommand{\PP}{\mathds{P}}
\newcommand{\LL}{\mathds{L}}
\newcommand{\M}{\mathcal{M}}
\newcommand{\set}[1]{\left\{ #1 \right\}}
\newcommand{\Mloc}{{\M}}
\newcommand{\Mhatloc}{{\widehat{\M}}}
\newcommand{\one}{\mathbf{1}}
\newcommand{\zero}{\mathbf{0}}
\newcommand{\bd}{{\mathbf{d}}}
\newcommand{\ba}{{\mathbf{a}}}
\newcommand{\Nbf}{{\bm{N}}}
\newcommand{\nubf}{{\bm{\nu}}}
\newcommand{\kbf}{{\bm{k}}}
\newcommand{\Gor}{\QQ\!\gor}
\newcommand{\KVarC}{K_0(\Var_\CC)}
\newcommand{\Zmot}{{Z_{\mot}}}
\newcommand{\Ztop}{{Z_{\topo}}}
\newcommand{\Zigusa}{{Z_{\igusa}}}
\newcommand{\llb}{\llbracket}
\newcommand{\rrb}{\rrbracket}
\DeclareMathOperator{\dd}{d\!}
\DeclareMathOperator{\ord}{ord}
\DeclareMathOperator{\Conj}{Conj}
\DeclareMathOperator{\orb}{orb}
\DeclareMathOperator{\Jac}{Jac}
\DeclareMathOperator{\Div}{div}
\DeclareMathOperator{\Sing}{Sing}
\DeclareMathOperator{\Supp}{Supp}
\DeclareMathOperator{\Var}{Var}
\DeclareMathOperator{\GL}{GL}
\DeclareMathOperator{\id}{Id}
\DeclareMathOperator{\red}{red}
\DeclareMathOperator{\reg}{reg}
\DeclareMathOperator{\sing}{sing}
\DeclareMathOperator{\gor}{Gor}
\DeclareMathOperator{\diag}{diag}
\DeclareMathOperator{\mot}{mot}
\DeclareMathOperator{\topo}{top}
\DeclareMathOperator{\igusa}{Igusa}
\title{Motivic Zeta Functions on $\mathds{Q}$-Gorenstein Varieties}
\author[E.~Le\'on-Cardenal]{Edwin Le\'on-Cardenal}
\address[E.~Le\'on-Cardenal]{CONACYT -- Centro de Investigaci\'on en Matem\'aticas (CIMAT). Unidad Zacatecas \\
Av.~Lasec Andador Galileo Galilei\\
Manzana 3 Lote 7\\
98160, Zacatecas, Mexico}
\email{edwin.leon@cimat.mx}
\author[J.~Mart\'{\i}n-Morales]{Jorge Mart\'{\i}n-Morales}
\address[J.~Mart\'in-Morales]{Centro Universitario de la Defensa, IUMA \\
Academia General Militar \\
Ctra.~de Huesca s/n. \\
50090, Zaragoza, Spain}
\urladdr{http://cud.unizar.es/martin}
\email{jorge@unizar.es}
\author[W.~Veys]{Willem Veys}
\address[W.~Veys]{
University of Leuven (KU Leuven),
Department of Mathematics,
Celestijnenlaan 200B, B-3001
Leuven (Heverlee), Belgium}
\urladdr{https://perswww.kuleuven.be/wim\_veys}
\email{wim.veys@kuleuven.be}
\author[J.~Viu-Sos]{Juan Viu-Sos}
\address[J.~Viu-Sos]{
	IMPA - Instituto de Matem\'atica Pura e Aplicada,
	Estr. Dona Castorina, 110 - Jardim Bot\^anico,
	Rio de Janeiro - RJ, 22460-320, Brazil}
\urladdr{https://jviusos.github.io/}
\email{jviusos@math.cnrs.fr}
\subjclass[2010]{Primary: 14B05; 
Secondary: 14E18, 
14G10, 
32S25, 
32S45} 
\keywords{Motivic zeta function, $\QQ$-Gorenstein varieties, resolution of singularity}
\thanks{The first author is partially supported by CONACYT Grant No.~286445.
The second author is partially supported by MTM2016-76868-C2-2-P,
Gobierno de Arag\'on (Grupo de referencia
``\'Algebra y Geometr\'ia''), E22 17R and E22 20R, cofunded by Feder 2014--2020
``Construyendo Europa desde Arag\'on'', and by FQM-333 from Junta de Andaluc\'ia.
The third author is supported by FWO Grant No.~G079218N of Research Foundation - Flanders.
The fourth author was supported by a PNPD/CAPES grant and by a postdoctoral grant \#2016/14580-7 by
\emph{Funda\c c\~ao de Amparo \`a Pesquisa do Estado de S\~ao Paulo} (FAPESP)}
\begin{document}

\begin{abstract}
We study motivic zeta functions for $\mathds{Q}$-divisors in a $\mathds{Q}$-Gorenstein variety.
By using a toric partial resolution of singularities we reduce this study to the local case of
two normal crossing divisors where the ambient space is an abelian quotient singularity.
For the latter we provide a closed formula which is worked out directly on the
quotient singular variety. As a first application we provide a family of surface singularities
where the use of weighted blow-ups reduces the set of candidate poles drastically.
We also present an example of a quotient singularity under the action of a nonabelian group,
from which we compute some invariants of motivic nature after constructing a
$\mathds{Q}$-resolution.
\end{abstract}

\maketitle


\section*{Introduction}

Roughly speaking a \emph{zeta function} is a formal power series that encodes the counting of certain mathematical objects,
traditionally of algebraic, arithmetic or geometric nature. Ideally, a zeta function preserves in some sense the algebraic,
arithmetic or geometric information of the original object. In this work we will focus on zeta functions associated with
hypersurface singularities giving rise to subtle invariants that have been studied for more than 40 years.

One of the first invariants of this type is the so-called \emph{Igusa zeta function} $\Zigusa(f; s)$ \cite{Igusa74},
it is defined as a $p$-adic parametric integral of a polynomial $f$ with coefficients in a $p$-adic field.
When $f$ is a complex polynomial, Denef and Loeser introduced
the \emph{topological zeta function} $\Ztop(f; s)$~\cite{DL92}, defined as a rational function constructed
in terms of the numerical data associated with an embedded resolution of the zero locus of~$f$. They also reinterpreted
the latter as a certain limit of the former showing in particular that $\Ztop(f; s)$ is independent of the chosen
resolution. Since the definition of $\Ztop(f; s)$ is not intrinsic, a useful and recurrent technique to study
this zeta function is the comparison of different resolutions.

The poles of both $\Zigusa(f; s)$ and $\Ztop(f; s)$ can be computed in terms of the multiplicities
and the topology of the exceptional divisors of a chosen resolution. Although each exceptional divisor in the resolution
process gives rise to a candidate pole, many of them are canceled in the calculation of the zeta function. Therefore knowing
the true poles of these zeta functions is an interesting and hard problem.
It is believed that this behavior is linked to the topology of the singularity.
More precisely, the \emph{monodromy conjecture} asserts that any pole of $\Ztop(f; s)$
provides an eigenvalue of the local monodromy action at some point of $f^{-1}(0)$.
It was proven
for some particular families of singularities, see e.g.~the references in~\cite{BoriesVeys16}, but remains widely open in general.
Typically, the strategy of the proof consists of the study of the combinatorics of the resolution to
determine the list of true poles among the candidates and comparing them with
the so-called~\emph{monodromy zeta function} obtained from the resolution via A'Campo's formula~\cite{ACampo75}.

Following Kontsevich's ideas on motivic integration~\cite{Kontsevich95}, Denef and Loeser developed a new version of
the previous zeta functions in the motivic setting over a smooth ambient space, namely the~\emph{motivic zeta function}
$\Zmot(f;s)$, see~\cite{DL98}. One of the reasons to work with $\Zmot(f;s)$ is the fact that it admits $\Zigusa(f; s)$
and $\Ztop(f; s)$ as specializations. The theory of motivic integration
over singular ambient spaces is presented in~\cite{DL99}. We refer for instance to~\cite{DL01, Craw04, Veys06, Nicaise10} for an
introduction to the theory of motivic zeta functions.

An \emph{embedded $\mathds{Q}$-resolution} is a (toric) partial resolution allowing a $V$-manifold~\cite{Steenbrink77}
with abelian quotient singularities for the final ambient space, see Section~\ref{sec:Qres}
for the details. Some evidences show that this type of resolution encodes in a compact manner the relevant information
of the singularity. For instance in~\cite{Veys97, Veys99} the use of these partial resolutions indicates clearly
that for a two-dimensional ambient space the poles for the topological and motivic zeta functions are given by
the so-called rupture divisors. Also in~\cite{Martin11, Martin16} the monodromy zeta function as well
as the Jordan blocks associated with its eigenvalues can be calculated via embedded $\mathds{Q}$-resolutions.
Other types of partial resolutions are used in~\cite{GLM97, BN16} for computing zeta functions.
Yet other approaches dealing with motivic integration and quotient singularities in the context
of Deligne-Mumford stacks are developed in~\cite{WY15,WY17,Yasuda04,Yasuda06,Yasuda14,Yasuda16}.

Inspired by the previous results and the useful description of the arc space for quotient singularities~\cite{DL02},
we study in this paper a generalization of $\Zmot(f;s)$ through $\QQ$-resolutions. In our work, much attention
is paid to the explicit calculations of $\Zmot(f;s)$ in some well selected cases,
see Sections~\ref{sc:Yomdin_Qres} and~\ref{sc:nonabelian_groups}.
In the search for a precise language to state our results,
we find it more enlightening to formulate our theory in the general framework of $\QQ$-Gorenstein varieties.
On this type of varieties the notion of motivic zeta functions associated with $\QQ$-Cartier
divisors was introduced in~\cite{Veys01}. This definition relies on a resolution of singularities
of the ambient space while our approach in this article allows one to work directly on the singular variety, see
Definition~\ref{def:motivic_zeta_function}.

For a precise description of the main results we present in this paper, some notation needs to be introduced.
Denote by $\KVarC$ the Grothendieck ring of algebraic varieties over $\CC$ and by $\Mloc=\KVarC[\LL^{-1}]$
the localization by the class of the affine line $\LL=[\AA^{1}]$. Consider $\Mhatloc$, the completion of $\Mloc$
with respect to the decreasing filtration $\{F^m\}_{m\in\ZZ}$ defined by $F^m = \langle [V] \LL^{-i} \mid
\dim V - i \leq - m \rangle$.

Let $X$ be a $\QQ$-Gorenstein algebraic variety over $\CC$ of pure dimension $n$ having at most log
terminal singularities. Denote by $\O_X$ the structural sheaf of $X$ and by $\omega_X$ the
canonical sheaf $j_{*} (\Omega_{X^{\reg}}^n)$ where $j:X^{\reg} \hookrightarrow X$ is the inclusion
of the smooth part of~$X$ and $\Omega_{X^{\reg}}^n$ is  the $n$th exterior power of the sheaf of
differentials over $X^{\reg}$. Then $\omega_X^{[r]} := j_{*} ((\Omega_{X^{\reg}}^n)^{\otimes r})$ is an invertible sheaf,
i.e.,~a locally free $\O_X$-module of rank $1$, for some $r \geq 1$. There is a measure $\mu_{\L(X)}$ on the
arc space $\L(X)$ that assigns to any $\CC[t]$-semi-algebraic subset of
$\L(X)$ a value in $\Mhatloc [\LL^{1/r}]$. In this way one can construct integrals of exponential functions $\LL^{-\alpha}$
where $\alpha: A \to \frac{1}{r} \ZZ \cup \{\infty\}$ is a $\CC[t]$-simple function bounded from below,
see Section~\ref{subsec:arcs_and_motivic} for the details.
In particular one can show, using the change of variables formula for a resolution of $X$,
that $\LL^{-\frac{1}{r}\ord_t \omega_X^{[r]}}$ is integrable on $X$. For every measurable subset
$A$ of $\L(X)$ one defines its $\QQ$-\emph{Gorenstein measure} by
\begin{equation}\label{def:Gor-measure}
\mu^{\Gor}_{\L(X)}(A) = \int_A \LL^{-\frac{1}{r}\ord_t \omega_X^{[r]}}
\dd \mu_{\L(X)} \in \Mhatloc[\LL^{1/r}].
\end{equation}
This notion appears for instance in \cite[Section~7.3.4]{ChNS18}. Note that every orbifold is $\QQ$-Gorenstein. However, the definition of orbifold measure given
in~\cite[Section~3.7]{DL02} is different from this one, since ours is intrinsic to $X$ while
the one in loc.~cit.~depends on the order of the Jacobian of a projection, see Example~\ref{ex:Gor-measure}.

Consider two effective $\QQ$-Cartier divisors $D_1$ and $D_2$ in $X$; take $r$ such that $r D_1$ and $r D_2$ are Cartier
and such that $\omega_X^{[r]}$ is invertible. There is a natural way to define
$\ord_tD_i:\L(X)\to\frac{1}{r}\ZZ\cup\{\infty\}$, namely $\ord_t D_i=\frac{1}{r} \ord_t (rD_i)$, $i=1,2$.
We introduce the notion of motivic zeta function in this context,
using the Gorenstein measure $\mu^{\Gor}_{\L(X)}$ defined in~\eqref{def:Gor-measure}, which encodes better
the structure of $X$. As in~\cite{Veys01, NemethiVeys12}, we also consider
the zeta function $\Zmot(D_1,D_2; s)$ associated to a pair $(D_1,D_2)$, where $D_2$ may be thought of as
a divisor associated with some $n$-form on $X$.

\begin{defi_intro}\label{def:motivic_zeta_function}
Let $W$ be a subvariety of $X$ and consider $\L(X)_W = \tau_0^{-1}(W)$, $\tau_0$ being the truncation map
$\tau_0: \L(X) \to X$, and set $\L(X)_W^{\reg}=\L(X)_W\setminus\L(X_{\sing})$.
The ($\QQ$-\emph{Gorenstein}) \emph{motivic zeta function} of the pair $(D_1,D_2)$
with respect to $W$ is
\[
\Zmot_{,W}(D_1,D_2; s) = \int_{\L(X)_{W}^{\reg}} \LL^{-(\ord_t D_1\cdot  s + \ord_tD_2)} \dd \mu^{\Gor}_{\L(X)},
\]
whenever the right-hand side converges in $\Mhatloc [\LL^{1/r}]\llbracket\LL^{- s/r}\rrbracket$.
When $W$ is just a point $P \in X$, the zeta function is simply called the \emph{local motivic zeta function}
at $P$ and it is denoted by $\Zmot_{,P}(D_1,D_2; s)$.
\end{defi_intro}

If $D_1$ and $D_2 + \Div(\omega_X)$ are Cartier and the right-hand side above converges, then $\Zmot_{,W}(D_1,D_2; s)$
is an element of $\Mhatloc\llbracket\LL^{- s}\rrbracket$.
Note that the previous definition of the motivic zeta function as a motivic integral is a generalization
of the classical one given in~\cite{DL98,DL99}, or in~\cite{NemethiVeys12, CauwbergsVeys17}, where different
$D_2$ associated with $n$-differential forms over a smooth $X$ are considered.
It is worth noticing that our computations are performed in $\Mhatloc$, as opposed to~\cite{DL02} where the
authors work in a quotient ring of $\Mhatloc$ assuming that the class of a quotient of a vector space $V$
by a finite group acting linearly should be that of~$V$.

The change of variables formula is one of the main tools for computing motivic integrals.
In the $\QQ$-Gorenstein case we present a formula that is stated in Theorem~\ref{thm:intro_orbs_chg_vars} below.
An important fact is that in the singular case the change of variables requires the understanding
of the order of the Jacobian of the corresponding morphism, which is often a hard task.
Note that computing the order of the Jacobian involves an explicit description of the sheaf of K\"ahler differential
$n$-forms. Using the $\QQ$-Gorenstein measure we were able to avoid the
computation of $\ord_t\Jac$ by computing the order of the relative canonical divisor, which is useful
in the applications we present.

\begin{theoremintro}\label{thm:intro_orbs_chg_vars}
Let $X$ and $Y$ be two $\QQ$-Gorenstein varieties of pure dimension $n$. Consider a proper birational map $\pi: Y \to X$ and a subvariety $W \subset X$. Then
\[
\Zmot_{,W}(D_1,D_2;  s)=\Zmot_{,\pi^{-1}W}(\pi^*D_1,\pi^*D_2 + K_{\pi};  s),
\]
where $K_\pi$ denotes the relative canonical divisor associated with $\pi$.
\end{theoremintro}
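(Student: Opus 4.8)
The plan is to route both sides of the asserted identity through a common smooth model, reducing the statement to the change of variables formula for a resolution (the tool already invoked in Section~\ref{subsec:arcs_and_motivic} to prove that $\LL^{-\frac{1}{r}\ord_t\omega_X^{[r]}}$ is integrable), together with the chain rule for relative canonical divisors. First I would unwind the definitions. Fix an integer $r\ge 1$ for which $rD_1$, $rD_2$, $rK_\pi$, $\omega_X^{[r]}$ and $\omega_Y^{[r]}$ are all Cartier, respectively invertible; expanding the $\QQ$-Gorenstein measure as in~\eqref{def:Gor-measure},
\[
\Zmot_{,W}(D_1,D_2;s)=\int_{\L(X)_W^{\reg}}\LL^{-(s\,\ord_t D_1+\ord_t D_2+\frac{1}{r}\ord_t\omega_X^{[r]})}\,\dd\mu_{\L(X)},
\]
and the right-hand side of the theorem has the analogous description over $\L(Y)_{\pi^{-1}W}^{\reg}$, with $D_1,D_2$ replaced by $\pi^*D_1,\pi^*D_2+K_\pi$ and $\omega_X^{[r]}$ by $\omega_Y^{[r]}$. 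It suffices to compare these two integrals coefficient by coefficient as power series in $\LL^{-s/r}$; this comparison simultaneously shows that convergence of one side is equivalent to that of the other.

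Next I would pick a resolution of singularities $h\colon Z\to Y$ with $Z$ smooth and set $g=\pi\circ h\colon Z\to X$, again proper birational with $Z$ smooth. By the change of variables formula for a resolution, in the form recalled in Section~\ref{subsec:arcs_and_motivic}, the morphism $g$ transports $\mu^{\Gor}_{\L(X)}$ to $\LL^{-\ord_t K_g}\,\dd\mu_{\L(Z)}$, where $K_g=K_Z-g^*K_X$; this is exactly the step in which the contribution of the Jacobian gets absorbed into the relative canonical divisor. It also identifies $\L(X)_W^{\reg}$ with $\L(Z)_{g^{-1}W}$ up to sets of measure zero (arcs contained in $X_{\sing}$, in the exceptional locus of $g$, or in its image, all of dimension $<n$ and not affecting the integrals, and recall $\L(Z)^{\reg}=\L(Z)$ since $Z$ is smooth). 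The same applies to $h$, with $K_h=K_Z-h^*K_Y$. Substituting, the left-hand side of the theorem becomes
\[
\int_{\L(Z)_{g^{-1}W}}\LL^{-(s\,\ord_t g^*D_1+\ord_t g^*D_2+\ord_t K_g)}\,\dd\mu_{\L(Z)},
\]
while the right-hand side becomes
\[
\int_{\L(Z)_{h^{-1}\pi^{-1}W}}\LL^{-(s\,\ord_t h^*\pi^*D_1+\ord_t h^*(\pi^*D_2+K_\pi)+\ord_t K_h)}\,\dd\mu_{\L(Z)}.
\]

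Finally I would match these integrals termwise. The domains coincide since $h^{-1}\pi^{-1}W=g^{-1}W$; functoriality of the pullback of $\QQ$-Cartier divisors gives $h^*\pi^*D_i=g^*D_i$, matching the $D_1$-term and the first $D_2$-term; and the chain rule for relative canonical divisors,
\[
K_g=K_Z-g^*K_X=K_Z-h^*\pi^*K_X=(K_Z-h^*K_Y)+h^*(K_Y-\pi^*K_X)=K_h+h^*K_\pi,
\]
gives $\ord_t h^*K_\pi+\ord_t K_h=\ord_t K_g$, matching the remaining terms. Hence the two integrals agree, which is the claim. The step I expect to be the main obstacle is the middle one: one must be sure that the change of variables formula for a resolution genuinely carries the non-standard $\QQ$-Gorenstein measure on the singular variety $X$ to $\LL^{-\ord_t K_g}\,\dd\mu_{\L(Z)}$ on the smooth model --- equivalently, that $\frac{1}{r}\ord_t\omega_X^{[r]}$ is characterised, independently of the chosen resolution, by $\frac{1}{r}(\ord_t\omega_X^{[r]})\circ g+\ord_t\Jac_g=\ord_t K_g$ --- and to keep careful track of the several measure-zero loci so that discarding them is harmless against the $\QQ$-Gorenstein measure. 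A direct application of a change of variables formula to $\pi$ itself is also conceivable, but would require such a formula between two singular ambient spaces, whereas the route through $Z$ uses only the better-behaved singular-to-smooth case.
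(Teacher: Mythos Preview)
Your proposal is correct but takes a genuinely different route from the paper. You factor through a smooth model $Z$, apply the change of variables twice (once for $g\colon Z\to X$, once for $h\colon Z\to Y$), and then invoke the chain rule $K_g=K_h+h^*K_\pi$ to match the two resulting integrals over $\L(Z)$. The paper instead does exactly what you mention as ``also conceivable'' at the end: it applies the Denef--Loeser change of variables formula (Theorem~\ref{thm:chg_var_DL}) directly to $\pi\colon Y\to X$ between the two singular spaces, and then proves the identity
\[
-\tfrac{1}{r}\ord_t\omega_X^{[r]}\circ\pi-\ord_t\Jac_\pi+\tfrac{1}{r}\ord_t\omega_Y^{[r]}=-\ord_t K_\pi
\]
in full generality by a short lattice computation in $V^{\otimes r}$. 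The ``main obstacle'' you flag---that $\tfrac{1}{r}(\ord_t\omega_X^{[r]})\circ g+\ord_t\Jac_g=\ord_t K_g$ for a resolution $g$---is precisely the special case of this identity with smooth source, so you still need essentially the same lemma, just in the situation where $\ord_t\omega_Z^{[r]}=0$. Your route trades the direct singular-to-singular change of variables for two singular-to-smooth ones plus the divisorial chain rule; this buys you the comfort of only ever integrating against the standard measure on the smooth $Z$, at the cost of an extra layer and an additional resolution. The paper's route is shorter and establishes the Jacobian identity once and for all (it is reused later, e.g.\ for $\lambda_\gamma$ in the proof of Theorem~\ref{thm:intro_main1}).
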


Consider now $U = \CC^n/G$, where $G \subset \GL_n(\CC)$ is a finite abelian group of order $d$. The structural
sheaf of $U$ is identified with $\O_{\CC^n}^G$ given by the $G$-invariant elements of $\O_{\CC^n}$.
We will denote by $x_1,\dots,x_n$ the coordinates on both $\CC^n$ and $U$.
Note that
the notion of $\QQ$-Weil divisor and $\QQ$-Cartier divisor coincide in this setting, besides $r$ can always be
chosen to be $d$. Let us fix a primitive $d$th root of unity $\zeta_d$. There exists a basis of $\CC^n$
such that any $\gamma\in G$ is a diagonal matrix of the form
\begin{equation}\label{eq:our_diag_matrix}
\text{Diag} \left( \zeta_d^{\varepsilon_{\gamma,1}}, \ldots, \zeta_d^{\varepsilon_{\gamma,n}} \right),
\end{equation}
with $0\leq \varepsilon_{\gamma,i}\leq d-1$, $i=1\ldots,n$. For any tuple $\kbf=(k_1,\ldots,k_n) \in \QQ^n$,
define the map $\varpi_\kbf: G \to \QQ$ given by
\begin{equation}\label{eq:def-weight-function}
\varpi_\kbf(\gamma) =
\frac{1}{d}\sum_{i=1}^{n} k_i\varepsilon_{\gamma,i}.
\end{equation}
Our second result expresses the $\QQ$-normal crossing situation for an abelian quotient space $U$
in terms of inner data of the underlying group and the multiplicities. %
Recall that a group in $\GL_n(\CC)$ is called \emph{small} if it does not contain rotations around hyperplanes other than the identity, see Definition~\ref{def:small}.

\begin{theoremintro}\label{thm:intro_main1}
Let $D_1$ and $D_2$ be divisors on $U$ given by $x_1^{N_1} \cdots x_n^{N_n}$ and
$x_1^{\nu_1-1} \cdots x_n^{\nu_n-1}$ with $N_i,\nu_i-1 \in \frac{1}{d} \ZZ_{\geq 0}$ for $i=1\ldots,n$.
Assume that $G$ is a small group acting diagonally as in (\ref{eq:our_diag_matrix}) and
denote $\Nbf = (N_1,\ldots,N_n)$ and $\nubf = (\nu_1,\ldots,\nu_n)$. Then
\[
\Zmot_{,0}(D_1,D_2; s) = S_G(\Nbf,\nubf;s)
\LL^{-n} \prod_{i=1}^n \frac{(\LL-1) \LL^{-(N_i s+\nu_i)}}{1-\LL^{-(N_i s+\nu_i)}},
\]
where $$S_G(\Nbf,\nubf;s) = \sum_{\gamma \in G} \LL^{\varpi_\Nbf(\gamma)\cdot  s + \varpi_\nubf(\gamma)}$$
and $\varpi_{\Nbf}$, $\varpi_{\nubf}$ are defined in~\eqref{eq:def-weight-function} above.
\end{theoremintro}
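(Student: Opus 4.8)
The plan is to pull the integral back to the smooth cover $\CC^n$ and to reorganise it as a sum over the finite group $G$. It helps to keep in mind that $U$ is the affine toric variety of the simplicial cone generated by $e_1,\dots,e_n$ in the lattice $N=\ZZ^n+\sum_{\gamma\in G}\ZZ\,b_\gamma$, where $b_\gamma=\tfrac1d(\varepsilon_{\gamma,1},\dots,\varepsilon_{\gamma,n})$; the lattice points of the half-open fundamental parallelepiped $\{\sum_i\lambda_ie_i:0\le\lambda_i<1\}$ are then canonically indexed by $N/\ZZ^n\cong G$, the point attached to $\gamma$ being $b_\gamma$, and under this dictionary $\varpi_\Nbf(\gamma)$ and $\varpi_\nubf(\gamma)$ are exactly the values at $b_\gamma$ of the support functions of $D_1$ and of $D_2+\Div(\omega_U)$. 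Thus the identity to be proved is the ``motivic box formula'' for the monomial pair $(D_1,D_2)$ on $U$.

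First I would invoke smallness of $G$ to get that $p\colon\CC^n\to U$ is quasi-\'etale, hence a genuine $G$-cover over $U^{\reg}$; the description of arc spaces of quotient singularities in \cite{DL02} then gives a partition $\L(U)_0^{\reg}=\bigsqcup_{\gamma\in G}\L(U)_{0,\gamma}$ according to the monodromy $\gamma$. For $\gamma$ of order $e=e_\gamma$, an arc in $\L(U)_{0,\gamma}$ lifts, after the base change $t=u^e$, to a $\gamma$-equivariant arc $\widetilde\phi\colon\operatorname{Spec}\CC[[u]]\to\CC^n$, and diagonality forces $\widetilde\phi_i(u)=u^{c_i}\psi_i(u^e)$ with $c_i=\tfrac ed\varepsilon_{\gamma,i}\in\{0,\dots,e-1\}$ and $\psi_i\in\CC[[t]]$. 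The assignment $\phi\mapsto(\psi_1,\dots,\psi_n)$ identifies $\L(U)_{0,\gamma}$, up to a subset of measure zero, with a locally closed part of $\L(\CC^n)$; the centring and ``$\reg$'' conditions constrain the coordinates with $\varepsilon_{\gamma,i}=0$ but only on lower-dimensional strata.

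Next comes the transport of the data. Because $p^*D_j$ is the divisor on $\CC^n$ of the same monomial (here smallness is used again, to know that $p$ is \'etale along the coordinate hyperplanes), a direct computation on $\L(U)_{0,\gamma}$ gives $\ord_tD_1=\varpi_\Nbf(\gamma)+\sum_iN_i\ord_t\psi_i$ and $\ord_tD_2=\varpi_{\nubf-\one}(\gamma)+\sum_i(\nu_i-1)\ord_t\psi_i$, the constant terms being the announced weight values precisely because $c_i=\tfrac ed\varepsilon_{\gamma,i}$. The hard part — and the step where the singular geometry of $U$ really intervenes — is to rewrite the Gorenstein density $\LL^{-\frac1d\ord_t\omega_U^{[d]}}\dd\mu_{\L(U)}$ along $\L(U)_{0,\gamma}$ in the coordinates $(\psi_i)$. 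One uses that $\omega_U^{[d]}$ is trivialised near $0$ by $(dx_1\wedge\dots\wedge dx_n)^{\otimes d}$ (legitimate since $(\det\gamma)^d=1$), and that the ramification $t=u^e$ produces the relative-canonical shift of a cyclic cover; the resulting twist is of $\varpi_\one(\gamma)$-type (an ``age'') and, combined with the contribution of $D_2$ above, promotes $\varpi_{\nubf-\one}(\gamma)$ to $\varpi_\nubf(\gamma)$. It is worth isolating this as a lemma — morally the statement of Theorem~\ref{thm:intro_orbs_chg_vars} for a $\QQ$-resolution of $U$, read off on each sector; everything else in the proof is formal. I expect the combinatorial bookkeeping of how this twist pairs with the numerators $\LL^{-(N_is+\nu_i)}$ of the one-variable factors below (which is what makes the box exponent come out as $+\varpi_\Nbf(\gamma)s+\varpi_\nubf(\gamma)$ rather than its negative) to be the most delicate point.

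With integrand and measure expressed sectorwise, the integral over $\L(U)_{0,\gamma}$ decouples into a product over $i=1,\dots,n$ of elementary geometric series in $\LL^{-(N_is+\nu_i)}$, times the $\gamma$-scalar from the previous step; after the dust settles the product loses its dependence on $\gamma$ and each sector contributes exactly
\[
\LL^{\varpi_\Nbf(\gamma)s+\varpi_\nubf(\gamma)}\,\LL^{-n}\prod_{i=1}^n\frac{(\LL-1)\LL^{-(N_is+\nu_i)}}{1-\LL^{-(N_is+\nu_i)}}.
\]
Summing over $\gamma\in G$ and recognising $\sum_{\gamma\in G}\LL^{\varpi_\Nbf(\gamma)s+\varpi_\nubf(\gamma)}=S_G(\Nbf,\nubf;s)$ yields the stated formula. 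As sanity checks I would first run the argument for $G=\{1\}$ (where $S_G=1$ and one recovers the classical monomial motivic zeta function on $\CC^n$) and, after suitable specialisations of $s$ and $\LL$, compare with the known stringy invariants of the quotient $U$.
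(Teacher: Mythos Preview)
Your overall strategy coincides with the paper's: decompose $\L(U)_0^{\reg}$ into the sectors $\L(U)_{0,\gamma}^{\reg}$ indexed by $\gamma\in G$ via~\cite{DL02}, untwist each sector to a standard model, and evaluate the resulting monomial integral as a product of geometric series. One point needs care, though: the assignment $\phi\mapsto(\psi_1,\dots,\psi_n)$ does \emph{not} identify $\L(U)_{0,\gamma}$ with a locally closed piece of $\L(\CC^n)$ but rather with (an open part of) $\L(\CC^n)/G$, since the $\gamma$-equivariant lift $\widetilde\phi$ is only determined up to the action of the centralizer $G_\gamma=G$. If you proceed as written and integrate over $\L(\CC^n)$ you will be off by a global factor $|G|$; this is precisely the place where your acknowledged ``hard part'' has to absorb the discrepancy, and your sketch does not indicate how.

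The paper avoids lifting to $\CC^n$ altogether by using the $\CC[t]$-endomorphism $\lambda_\gamma$ of $\L(U)$ (multiplication of the $i$th coordinate by $t^{e_{\gamma,i}/d}$, well defined on the quotient) and applying the singular change of variables formula to it. This reduces your ``hard part'' to the single input $\mu^{\Gor}_{\L(U)}(\L(\CC^n)/G)=1$, from which the Gorenstein measures of the order-$\kbf$ cells $A_\kbf\subset\L(\CC^n)/G$ follow by a second application of change of variables. Your ``delicate'' sign issue is then handled at the very end by the elementary identity $w_\kbf(\gamma)+\varpi_\kbf(\gamma^{-1})=\sum_ik_i$: the paper first obtains the coefficient $\LL^{-(w_\Nbf(\gamma)s+w_\nubf(\gamma))}$ per sector, and the extra $\LL^{-\sum_i(N_is+\nu_i)}$ coming from the reindexing $\gamma\mapsto\gamma^{-1}$ is exactly absorbed by the numerators $\LL^{-(N_is+\nu_i)}$ of the one-variable factors, yielding $S_G(\Nbf,\nubf;s)$ with the positive $\varpi$-exponents. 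Your toric preamble is a nice conceptual gloss but plays no role in the actual argument.
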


This result can be reformulated for groups that are not small as well, see Remark~\ref{rk:comparison-DL}.
We refer to~\cite{BN16} for a related statement in the context of log geometry.
Observe that the term \[\LL^{-n} \prod_{i=1}^n \frac{(\LL-1) \LL^{-(N_i s+\nu_i)}}{1-\LL^{-(N_i s+\nu_i)}}\]
corresponds to the local motivic
zeta function of $x_1^{N_1} \cdots x_n^{N_n}$ and $x_1^{\nu_1-1} \cdots x_n^{\nu_n-1}$ over $\CC^n$,
while $S_G(\Nbf,\nubf;s)$ is related to the standard weight function of $G$ twisted by the multiplicities
$N_i$ and $\nu_i$. The standard weight function is also called \emph{age} in \cite{Reid80, Kollar13}.
Moreover, if $D_1$ and $D_2 + \Div (\omega_U)$ are Cartier, then the sum $S_G(\Nbf,\nubf;s)$ belongs
to $\Mhatloc[\LL^{s}]$.
When $N_i=\nu_i-1=0$ for all $i=1,\ldots,n$, the result above
extends~\cite[Theorem 3.6]{DL02} for abelian quotient singularities, see Remark~\ref{rk:comparison-DL}.

Combining Theorem~\ref{thm:intro_orbs_chg_vars} and Theorem~\ref{thm:intro_main1} above, one obtains
an explicit formula for the motivic zeta function in terms of an embedded $\QQ$-resolution
$\pi: Y\to X$ of $D_1+D_2$. This can be understood as a partial resolution allowing $Y$ to
contain abelian quotient singularities and $\pi^{*} (D_1 + D_2)$ to have $\QQ$-normal crossings
in the sense of Steenbrink~\cite{Steenbrink77}. In this situation, there is a natural finite
stratification $Y = \bigsqcup_{k \geq 0} Y_{k}$ such that each stratum is characterized by the
following condition. Given $q \in Y_k$ there exist a finite abelian group $G_k \subset \GL_n(\CC)$
acting diagonally as in~\eqref{eq:our_diag_matrix},
$\Nbf_k = (N_{1,k},\ldots,N_{n,k}) \in \QQ^n_{\geq 0}$,
and $\nubf_k = (\nu_{1,k},\ldots,\nu_{n,k}) \in \QQ^n_{\geq 0}$ such that $Y$ is locally isomorphic around $q$
to $U_k = \CC^n/G_k$ and the local equations of $\pi^{*} D_1$ and $\pi^{*} D_2 + K_{\pi}$ are given by
$x_1^{N_{1,k}} \cdots x_n^{N_{n,k}}$ and $x_1^{\nu_{1,k}-1} \cdots x_n^{\nu_{n,k}-1}$.
Moreover, the data $G_k$, $\Nbf_k$, $\nubf_k$ do not depend on the chosen $q \in Y_k$ but
only on the stratum $Y_k$. For the next result recall that $rD_1$ and $rD_2$ are Cartier and
$\omega_X^{[r]}$ is invertible.

\begin{theoremintro}\label{thm:intro_main2}
Using the previous notation one has
\[
\Zmot_{,W}(D_1,D_2;s) = \LL^{-n} \sum_{k \geq 0}
\left[ Y_k \cap \pi^{-1}(W) \right] S_{G_k}(\Nbf_k,\nubf_k;s)
\prod_{i = 1}^n \frac{(\LL-1) \LL^{-(N_{i,k} s+\nu_{i,k})}}{1-\LL^{-(N_{i,k} s+\nu_{i,k})}}.
\]
Moreover, the zeta function $\Zmot_{,W}(D_1,D_2;s)$ is a rational function and it belongs
to the subring of $\Mhatloc [\LL^{1/r}]\llbracket\LL^{- s/r}\rrbracket$ generated by $\Mloc$, $\LL^{1/r}$,
$\LL^{-s/r}$, and $\set{ \frac{\LL-1}{1-\LL^{-(a s+b)}} }_{a,b-1\in \frac{1}{r}\ZZ}$.
\end{theoremintro}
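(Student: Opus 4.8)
The plan is to deduce Theorem~\ref{thm:intro_main2} from the $\QQ$-Gorenstein change of variables formula (Theorem~\ref{thm:intro_orbs_chg_vars}) and the local computation on abelian quotient singularities (Theorem~\ref{thm:intro_main1}), patched along the stratification of $Y$. First I would fix an embedded $\QQ$-resolution $\pi\colon Y\to X$ of $D_1+D_2$, which exists by the construction of Section~\ref{sec:Qres}, together with its finite stratification $Y=\bigsqcup_{k\geq 0}Y_k$, and apply Theorem~\ref{thm:intro_orbs_chg_vars} to obtain
\[
\Zmot_{,W}(D_1,D_2;s)=\Zmot_{,\pi^{-1}(W)}(\pi^*D_1,\pi^*D_2+K_\pi;s).
\]
Since $\pi^{-1}(W)=\bigsqcup_{k\geq 0}\bigl(Y_k\cap\pi^{-1}(W)\bigr)$ and the integral defining the motivic zeta function is additive with respect to a decomposition of the base locus $\tau_0^{-1}(\,\cdot\,)$ into locally closed pieces, extending the definition of the zeta function to constructible base loci by additivity I would then write
\[
\Zmot_{,\pi^{-1}(W)}(\pi^*D_1,\pi^*D_2+K_\pi;s)=\sum_{k\geq 0}\Zmot_{,\,Y_k\cap\pi^{-1}(W)}(\pi^*D_1,\pi^*D_2+K_\pi;s),
\]
a finite sum.

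The heart of the argument is the evaluation of the $k$-th summand. Around any $q\in Y_k$ the variety $Y$ is isomorphic to $U_k=\CC^n/G_k$ with $G_k$ small and acting diagonally, the triple $(G_k,\Nbf_k,\nubf_k)$ is constant along $Y_k$, and there the local equations of $\pi^*D_1$ and $\pi^*D_2+K_\pi$ are the monomials $x_1^{N_{1,k}}\cdots x_n^{N_{n,k}}$ and $x_1^{\nu_{1,k}-1}\cdots x_n^{\nu_{n,k}-1}$, the exponents attached to the directions tangent to $Y_k$ being trivial. Since $G_k$ fixes $q$ and acts trivially on $T_qY_k$, the action splits and exhibits $Y$ near $Y_k$, in suitable charts, as a product of an open subset of $Y_k$ with the transverse model $\CC^c/G_k$, where $c$ is the number of nontrivial transverse directions. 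Using this local product structure, the explicit description of the arc space of an abelian quotient singularity from~\cite{DL02}, and the additivity of the $\QQ$-Gorenstein measure, the integral over $\L(Y)^{\reg}_{Y_k\cap\pi^{-1}(W)}$ should factor as the class $[Y_k\cap\pi^{-1}(W)]$ recording the base points, times the transverse contribution, the latter being exactly the local motivic zeta function at $0\in U_k$ computed by Theorem~\ref{thm:intro_main1}, namely
\[
S_{G_k}(\Nbf_k,\nubf_k;s)\,\LL^{-n}\prod_{i=1}^n\frac{(\LL-1)\LL^{-(N_{i,k} s+\nu_{i,k})}}{1-\LL^{-(N_{i,k} s+\nu_{i,k})}},
\]
where the factors for tangent directions ($N_{i,k}=0$, $\nu_{i,k}=1$) equal $1$ and the power $\LL^{-n}$ absorbs both the transverse and the tangential normalization. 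Summing over $k$ gives the displayed formula; along the way one must check that $S_{G_k}$, though written through a diagonalizing basis, depends only on the stratum, so that the local contributions glue consistently, which is guaranteed by the intrinsic nature of the $\QQ$-Gorenstein measure and of the data $(G_k,\Nbf_k,\nubf_k)$.

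The main obstacle is precisely this globalization: Theorem~\ref{thm:intro_main1} is a statement at a single point, and one has to promote it to an integral over a possibly positive-dimensional stratum sitting inside a singular $V$-manifold. Concretely, one needs that $\L(Y)^{\reg}$ restricted to arcs based on $Y_k$ is covered by charts on which it decomposes compatibly as the arc space of an open subset of $Y_k$ times the arc space of the transverse quotient model, that the Gorenstein measure and the integrand $\LL^{-(\ord_t\pi^*D_1\cdot s+\ord_t(\pi^*D_2+K_\pi))}$ restrict compatibly to this decomposition (the only singular datum, the order of the relative canonical divisor $K_\pi$, being constant along $Y_k$), and that these local identities glue via additivity.

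Once the displayed finite formula is established, the remaining assertions are routine. Each factor $\frac{(\LL-1)\LL^{-(N_{i,k} s+\nu_{i,k})}}{1-\LL^{-(N_{i,k} s+\nu_{i,k})}}$ equals $\LL^{-(N_{i,k} s+\nu_{i,k})}$ times a generator $\frac{\LL-1}{1-\LL^{-(N_{i,k} s+\nu_{i,k})}}$ with $N_{i,k},\nu_{i,k}-1\in\frac1r\ZZ_{\geq 0}$ (enlarging $r$ to be divisible by every $|G_k|$, which is possible since the stratification is finite, only enlarges the target subring); each $S_{G_k}(\Nbf_k,\nubf_k;s)=\sum_{\gamma\in G_k}\LL^{\varpi_{\Nbf_k}(\gamma)\cdot s+\varpi_{\nubf_k}(\gamma)}$ is a finite sum of monomials in $\LL^{\pm 1/r}$ and $\LL^{\pm s/r}$; $[Y_k\cap\pi^{-1}(W)]\in\KVarC$ defines an element of $\Mloc$; and $\LL^{-n}\in\Mloc$. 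Since all $N_{i,k}\geq 0$ and all $\nu_{i,k}\geq 1$, the geometric series $\frac1{1-\LL^{-(N_{i,k} s+\nu_{i,k})}}$ converges in $\Mhatloc[\LL^{1/r}]\llbracket\LL^{-s/r}\rrbracket$, so the whole sum lies in the stated subring and is in particular a rational function of $\LL^{-s/r}$.
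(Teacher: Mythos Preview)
Your proposal is correct and follows essentially the same route as the paper: apply Theorem~\ref{thm:intro_orbs_chg_vars}, decompose along the stratification $\pi^{-1}(W)=\bigsqcup_k(Y_k\cap\pi^{-1}(W))$, factor each summand as $[Y_k\cap\pi^{-1}(W)]$ times the local zeta function at a point of $Y_k$, and invoke Theorem~\ref{thm:intro_main1}. The only notable difference is that what you identify as ``the main obstacle'' and treat at some length (the product decomposition of arcs over a positive-dimensional stratum) is dispatched in the paper in one line by a reference to a standard Fubini-type result \cite[Thm.~2.15]{Craw04}, using that the local model $(G_k,\Nbf_k,\nubf_k)$ is constant along $Y_k$; your more explicit discussion is a reasonable unpacking of that citation.
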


A version of the previous result for the topological and Igusa zeta functions was already stated in~\cite{Veys97} for
$X=\CC^2$, $D_1$ an effective Cartier divisor, and $D_2=0$. In loc.~cit., the sum $S_{G_k}$
appears as a certain non-symmetric deformation of the intersection matrix of the minimal resolution of a
Hirzebruch-Jung singularity. Note that our formula for $S_{G_k}$ is very explicit and depends
only on the local action of $G_k$ on each stratum. This gives a conceptual explanation
for the appearance of the determinant $\mathcal{D}_r$ in \cite[Definition 5.5]{Veys97}, which is not well understood,
see Remark~\ref{rk:comparison-Veys} and Example~\ref{ex:comparison-Veys}.

When $\LL\to 1$, the sum $S_G(\Nbf,\nubf;s)\to d=|G|$. Hence, specializing to the Euler characteristic gives
rise to a formula for the topological zeta function in terms of a $\QQ$-resolution, where the contribution of each
stratum is multiplied by the order of the corresponding group, see Corollaries~\ref{cor:top1} and~\ref{cor:top2}.
Also, as is well known by specialists, the $p$-adic Igusa zeta function as well as the Hodge
and  arc-Euler characteristic zeta function are obtained by specialization of the motivic zeta function.

As a first application, we show in Section~\ref{sc:Yomdin_Qres} how our techniques reduce the number of candidate
poles for the motivic zeta function by providing an explicit calculation for a family of Yomdin surface singularities
using $\QQ$-resolutions.
Now, given a normal crossing divisor in a quotient singularity by a nonabelian group, Batyrev \cite{Batyrev99,Batyrev00}
developed a method to arrive at a birationally equivalent model with only abelian singularities.
Following these ideas we present in Section~\ref{sc:nonabelian_groups} an example of a tetrahedral singularity
and compute some motivic invariants.

The paper is organized as follows. In Section~\ref{sc:settings}, we give the necessary definitions
and notations for the concepts to be dealt with in this paper such as arc spaces, motivic integration,
$\QQ$-Gorenstein measure, and embedded $\QQ$-resolutions.
In Section~\ref{sc:def_zeta_function}, we introduce the motivic zeta function in the context of
$\QQ$-Gorenstein varieties, see Definition~\ref{def:motivic_zeta_function}, and we prove the change of variables
formula described in Theorem~\ref{thm:intro_orbs_chg_vars}. Section~\ref{sc:abelian_quotient_sing}
is devoted to the quotient case under a finite abelian group and the formula in terms of an embedded $\QQ$-resolution,
see Theorem~\ref{thm:intro_main1} and Theorem~\ref{thm:intro_main2}. Finally, in Sections~\ref{sc:Yomdin_Qres}
and~\ref{sc:nonabelian_groups} we present some examples and applications of our techniques.

\subsection*{Acknowledgments} We thank Enrique Artal and Jos\'e Ignacio Cogolludo for fruitful discussions
during the preparation of this manuscript. Some of the ideas behind this work arose at the Third International
Workshop on Zeta Functions in Algebra and Geometry held in Guanajuato.
The second author wants to thank the Fulbright Program (within the Jos\'e Castillejo grant by Ministerio de
Educaci\'on, Cultura y Deporte) for its financial support while writing this paper. He also thanks
the University of Illinois at Chicago, especially Lawrence Ein for his warm welcome and
support in hosting him.


\bigskip
\section{Settings and preliminaries}\label{sc:settings}

\subsection{Preliminaries on arc spaces and motivic integration}\mbox{}\label{subsec:arcs_and_motivic}

The theory of arc spaces and motivic integration has an interesting history, that the reader may consult
for instance in \cite{Veys06},
or more recently in~\cite{ChNS18}.

Denote by $\KVarC$ the Grothendieck ring of algebraic varieties over $\CC$. This is the free abelian group
generated by the symbols $[V]$, where $V$ is a variety, subject to the relations $[V]=[V']$ if $V \cong V'$
and $[V] = [V \setminus V'] + [V']$ if $V'$ is closed in $V$. Its ring structure is given by
$[V] \cdot [V'] = [V \times V']$. Set $\LL=[\AA^{1}]$ and denote $\Mloc=\KVarC[\LL^{-1}]$, the localization
by the class of the affine line. Consider the decreasing filtration $\{F^m\}_{m\in\ZZ}$ on $\Mloc$,
where $F^m$ is the subgroup of $\Mloc$ generated by $\{[V] \LL^{-i} \mid \dim V - i \leq - m\}$
and denote by $\Mhatloc$ the completion of $\Mloc$ with respect to this filtration.

Let $X$ be an algebraic variety over $\CC$, not necessarily smooth. For each natural number $m$ we consider
the \textit{space $\L_m(X)$ of $m$-jets on $X$}. This is a complex algebraic variety, whose $\CC$-rational
points are the $\CC[t]/\langle t^{m+1}\rangle$-rational points of $X$. The projective limit of these
algebraic varieties $\L_m(X)$ is the \textit{arc space $\L(X)$ of $X$}, which is a reduced separated
scheme over $\CC$ (in general not of finite type). Then the $\CC$-rational points of $\L(X)$ are
precisely the $\CC\llb t\rrb$-rational points of $X$.

For any $m$, and for $n \geq m$, we have natural morphisms
\[
\tau_m: \L(X)\to \L_m(X)\ \text{ and }\ \tau_m^{n}: \L_n(X)\to \L_m(X),
\]
obtained by truncation. Note that $\L_0(X)=X$. For any arc $\varphi$ on $X$, we call $\tau_0(\varphi)$
the origin of $\varphi$ and if $W$ is a subvariety of $X$, we set $\L(X)_W=\tau_0^{-1}(W)$.

A minor variation of the space of $m$-jets and the space of arcs of $X$ is required in the study of
$\QQ$-Gorenstein varieties. Given an integer $d\geq1$, the space $\L_m^{1/d}(X)$ of
\textit{ramified $m$-jets on $X$} is a complex algebraic variety, whose $\CC$-rational points,
are the $\CC[t^{1/d}]/\langle t^{(m+1)/d}\rangle$-rational points of $X$. The projective limit
of these $\L_m^{1/d}(X)$ is the \textit{ramified arc space $\L^{1/d}(X)$ of $X$}. Note that
$\L^{1/d}(X)$ is isomorphic to $\L(X)$. We shall still use $\tau_m$ to denote the canonical
morphism $\tau_m: \L^{1/d}(X)\to \L_m^{1/d}(X)$ and for $W$ a subvariety of $X$,
we set $\L^{1/d}(X)_W=\tau_0^{-1}(W)$.

Let $X$ and $Y$ be two complex varieties. A function $\phi: \L(Y)\to \L(X)$, is called a
\textit{$\CC[t]$-morphism} if it is induced by a morphism of $\CC[t]$-schemes
$Y\otimes_\CC\CC[t]\longrightarrow X\otimes_\CC\CC[t]$.

Our next target is the construction of a motivic measure over the completion $\Mhatloc$, introduced before. Recall that a subset
of a variety $X$ is called \textit{constructible} when it is a finite union of (locally closed)
subvarieties. It is well known that $\tau_m(\L(X))$ is a constructible subset of the algebraic
variety $\L_m(X)$. Furthermore, when $X$ is smooth, $\tau_m$ is surjective, and $\tau_m^n$ is
a locally trivial fibration with fiber $\AA^{(n-m) \dim X}$.

Let $X$ be a complex algebraic variety of pure dimension $n$. A subset $C$ of $\L(X)$ is called
\textit{constructible} if $C=\tau_m^{-1}(B)$ with $B$ a constructible subset of $\L_m(X)$ for
some $m>0$. A subset $C$ of $\L(X)$ is called \textit{stable} if it is constructible and
$C\cap \L(X_{\sing})=\emptyset$. When $C\subset \L(X)$ is stable,  the elements
$[\tau_m(C)]\LL^{-(m+1)n}$  in $\Mloc$ stabilize for $m$ big enough, and
\[
\tilde{\mu}(C) = \lim\limits_{m\to\infty}[\tau_m(C)]\LL^{-(m+1)n}\in\Mloc
\]
is called the \textit{naive motivic measure of} $C$. This claim follows, if $X$ is smooth,
from the fact that $\tau_m^{m+1}$ are locally trivial fibrations with fiber $\AA^{n}$.
In the case $X$ is singular, the claim follows from \cite[Lemma 4.1]{DL99}. When $C$ is not
stable, $[\tau_m(C)]\LL^{-(m+1)n}$ will not always stabilize. However, it can be proven that
the limit \[{\mu}(C) = \lim\limits_{m\to\infty}[\tau_m(C)]\LL^{-(m+1)n}\] exists in the completed
Grothendieck group $\Mhatloc$. The element $\mu(C)$ of $\Mhatloc$ is called
\textit{the motivic measure of} $C$. This yields a $\sigma$-additive measure $\mu$ on the Boolean
algebra of constructible subsets of $\L(X)$.

A special family of measurable sets consists of the semi-algebraic subsets of $\L(X)$, which can be defined from the affine case by using charts.
Roughly speaking, a \textit{semi-algebraic subset of} $\L(\AA^n)$ is a finite Boolean combination of subsets defined by conditions involving (in)equalities between orders of $\CC[t]$-polynomial equations of elements in $\CC((t))$ or their lowest degree coefficients. We refer to~\cite[Section~2]{DL99} for a formal definition.
One can prove that for every semi-algebraic subset $C$ of
$\L(X)$, the measure $\mu(C)$ is precisely
$\lim\limits_{m\to\infty}[\tau_m(C)]\LL^{-(m+1)n}\in\Mhatloc$.

Let $A$ be a measurable subset of $\L(X)$; a function $\alpha: A\rightarrow \ZZ\cup\{\infty\}$ is called
\textit{simple} if all of its fibres are measurable. For such $\alpha$ we say $\LL^{-\alpha}$ is \textit{integrable} if
the series \[\int_{A}\LL^{-\alpha}\dd\mu = \sum_{i \in \ZZ}\mu(A\cap\alpha^{-1}(i))\LL^{-i}  \] is
convergent in $\Mhatloc$. Note that this will always be the case when $\alpha$ is bounded from below.

Before stating one of the main results of the theory of motivic integration, namely the change of
variables formula, some definitions are in order. When $X$ and $Y$ are smooth varieties of pure
dimension and $h:\L(Y)\rightarrow\L(X)$ a $\CC[t]$-morphism, $\ord_t\Jac_{h}$ is simply defined as the order of the local generator of the ordinary Jacobian
determinant with respect to local coordinates on $X$ and $Y$. In the singular case the definition
is more involved; the following is a slight generalization of~\cite[Section~1.14]{DL02}.
We set $\Omega^n_X$ for the $n$th exterior power of the sheaf of differential forms on a variety $X$.

\begin{defi}\label{def:order-jacobian}
Let $X$ and $Y$ be complex varieties of pure
dimension $n$ and let $h:\L(Y)\rightarrow\L(X)$ be a $\CC[t]$-morphism.
Take $\psi\in\L(Y)\setminus\L(Y_{\sing})$.
We consider $\psi^\ast(\Omega_Y^n)$ as a $\CC\llb t\rrb$-module and denote by $L_Y$ its image in the
$\CC((t))$-vector space $V=\psi^\ast(\Omega_Y^n)\otimes_{\CC\llb t\rrb}\CC((t))$. Hence $L_Y$ is a lattice of rank 1 in $V$.

\begin{itemize}
	\item[(1)] Consider also the image $L_X$ of the module $\psi^\ast h^\ast(\Omega_{X\otimes\CC[t]}^n)$ in $V$. If $L_X$
	is nonzero, then $L_X=t^e L_Y$ for some $e\in\NN$ and we set $\ord_t\Jac_{h}(\psi)=e$. When $L_X=0$, we put
	$\ord_t\Jac_{h}(\psi)=\infty$.
	
	\item[(2)] Let $\omega$ be an invertible $\O_Y$-subsheaf of $\Omega^n_Y\otimes_\CC \CC(Y)$.
	Denote by $\Lambda_Y$ the
	image of $\psi^\ast(\omega)$ in $V$.  If $\Lambda_Y$ is nonzero, then
	$\Lambda_Y=t^e L_Y$ for some $e\in\ZZ$ and we set $\ord_t\omega(\psi)=e$. When $\Lambda_Y=0$,
	we put $\ord_t\omega(\psi)=\infty$.
	
	\item[~(2')] More generally, fix a positive integer $r$ and let $\omega'$ be an invertible $\O_Y$-subsheaf of $(\Omega^n_Y)^{\otimes r}\otimes_\CC \CC(Y)$.
	Then we denote by $\Lambda_Y$ the
	image of $\psi^\ast(\omega')$ in $V^{\otimes r}$ and we define analogously $\ord_t\omega'(\psi)$ as the number $e\in\ZZ \cup \{\infty\}$ such that
	$\Lambda_Y=t^e L_Y^{\otimes r}$.
\end{itemize}
\end{defi}

By \cite[Lemma~1.15]{DL99} the maps $\ord_t\Jac_{h}$, $\ord_t\omega$ and $\ord_t\omega'$ are simple.

\begin{theorem}[{\cite[Theorem~1.16]{DL02}}, Change of variables formula]\label{thm:chg_var_DL}
	Let $X$ and $Y$ be complex algebraic varieties of pure dimension $n$, and let $h:\L(Y)\rightarrow\L(X)$
	be a $\CC[t]$-morphism. Let $A$ and $B$ be two semi-algebraic sets in $\L(X)$ and $\L(Y)$, respectively,
	such that $h$ induces a bijection between $B$ and $A$. Then, for any simple function
	$\alpha:A\to\ZZ\cup\{\infty\}$ such that $\LL^{-\alpha}$ is integrable on $A$, we have
	\[
		\int_A \LL^{-\alpha}\dd\mu=\int_B \LL^{-\alpha\circ h-\ord_t\Jac_h(\varphi)}\dd\mu.
	\]
\end{theorem}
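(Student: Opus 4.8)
The plan is to strip the statement down, in three reduction steps, to a single geometric assertion about fibres of truncated morphisms, and then to settle that assertion.

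\textbf{Step 1: reductions.} Since $\alpha$ is simple one may assume $\alpha\equiv 0$: treating each fibre $A_i:=\alpha^{-1}(i)$ together with $B_i:=h^{-1}(A_i)\cap B$ separately, $h$ restricts to a bijection $B_i\to A_i$ and $\mu$ is $\sigma$-additive, so it suffices to prove the identity for each $i$, after which one cancels the unit $\LL^{-i}$. One is thus reduced to proving $\mu(A)=\int_B\LL^{-\ord_t\Jac_h}\dd\mu$. Next, $\ord_t\Jac_h$ is again simple with semi-algebraic fibres, so decomposing $B=\bigsqcup_e B^{(e)}$ along its values and setting $A^{(e)}:=h(B^{(e)})$, one reduces further to
\[
\text{if }\ \ord_t\Jac_h\equiv e\in\NN\ \text{ on }B,\ \text{ then }\ \mu(A)=\LL^{-e}\mu(B).
\]
Finally the parts of $A$ and $B$ meeting $\L(X_{\sing})$, $\L(Y_{\sing})$, as well as $B^{(\infty)}$ and $A^{(\infty)}$, are $\mu$-negligible — arcs generically contained in a proper closed subvariety form a null set, and $\ord_t\Jac_h=\infty$ forces exactly that — so they may be discarded, and on what remains $\LL^{-\ord_t\Jac_h}$ is finite.

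\textbf{Step 2: from a singular source to a smooth one.} I would reduce to the case where the source variety is smooth. Working in affine charts, choose a resolution $\rho:W\to Y$ with $W$ smooth. Since $\Omega^n_W$ is invertible, Definition~\ref{def:order-jacobian}(1) gives a chain rule: for an arc $\widetilde\psi$ on $W$ lying over $\psi=\rho\circ\widetilde\psi\in\L(Y)\setminus\L(Y_{\sing})$, chasing the chain of rank-one $\CC\llb t\rrb$-lattices $L_X\subseteq L_Y\subseteq L_W$ in the one-dimensional $\CC((t))$-space $\widetilde\psi^\ast(\Omega^n_W)\otimes_{\CC\llb t\rrb}\CC((t))$ — the colength of $L_X$ in $L_W$ being the sum of the colengths of $L_X$ in $L_Y$ and of $L_Y$ in $L_W$ — one obtains
\[
\ord_t\Jac_{h\circ\rho}(\widetilde\psi)=(\ord_t\Jac_h)(\psi)+\ord_t\Jac_\rho(\widetilde\psi).
\]
Granting the change of variables formula in the smooth-source case — which, together with the reductions of Step~1, follows from Step~3 below — applied both to $\rho$ and to $h\circ\rho$, and combining the two via this chain rule, one recovers the formula for $h$; here all identities hold up to the null set over the exceptional locus of $\rho$, on whose complement $\rho$ induces a bijection of arc spaces. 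So it remains to treat the reduced statement of Step~1 when $Y$ is smooth.

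\textbf{Step 3: the jet-space core.} With $Y$ smooth, pass to $m$-jets. The morphism $h$ induces compatible maps $h_m:\L_m(Y)\to\L_m(X)$ with $h_m\circ\tau_m=\tau_m\circ h$, hence a surjection $h_m:\tau_m(B)\to\tau_m(A)$ between constructible subsets of the jet spaces; and by the limit description recalled in Section~\ref{subsec:arcs_and_motivic} — the existence of the limit for the possibly singular $X$ being \cite[Lemma~4.1]{DL99} — one has $\mu(A)=\lim_m[\tau_m(A)]\LL^{-(m+1)n}$ and $\mu(B)=\lim_m[\tau_m(B)]\LL^{-(m+1)n}$ in $\Mhatloc$. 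The heart of the matter is the Denef--Loeser key lemma on truncated morphisms: for every $m$ above an explicit bound (depending on $e$ and on the data defining $A$ and $B$), $h_m:\tau_m(B)\to\tau_m(A)$ is a piecewise trivial fibration with fibre $\AA^e$, so that $[\tau_m(B)]=\LL^{e}[\tau_m(A)]$; dividing by $\LL^{(m+1)n}$ and letting $m\to\infty$ yields $\mu(A)=\LL^{-e}\mu(B)$. I expect this key lemma to be the main obstacle: it requires controlling the fibres of $h_m$ \emph{uniformly in} $m$, via a Hensel/Newton-iteration argument over $\CC\llb t\rrb$ in which the hypothesis $\ord_t\Jac_h\equiv e$ is precisely what measures the defect of infinitesimal surjectivity of $h$. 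A secondary point, proper to the singular target, is to verify that the intrinsic $\ord_t\Jac_h$ of Definition~\ref{def:order-jacobian}, defined through the sheaves $\Omega^n$, coincides with the order computed after a local closed embedding $X\hookrightarrow\AA^N$, so that the jet-space calculations can be carried out in the ambient affine space. Once the key lemma and this compatibility are in hand, reassembling Steps~1--3 is routine.
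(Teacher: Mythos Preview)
The paper does not give its own proof of this statement: Theorem~\ref{thm:chg_var_DL} is quoted verbatim from \cite[Theorem~1.16]{DL02} as a preliminary tool and is used as a black box in the proofs of Theorems~\ref{thm:intro_orbs_chg_vars} and~\ref{thm:intro_main1}. There is therefore nothing in this paper to compare your sketch against.

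For what it is worth, your outline is a faithful summary of the Denef--Loeser strategy in the original reference: the reduction to constant $\alpha$ and then to constant $\ord_t\Jac_h=e$ via $\sigma$-additivity, the elimination of null sets coming from singular loci and from $\{\ord_t\Jac_h=\infty\}$, the passage through a resolution of the source together with the chain rule for $\ord_t\Jac$, and finally the ``key lemma'' that for $m\gg0$ the truncated map $h_m:\tau_m(B)\to\tau_m(A)$ is a piecewise trivial $\AA^e$-fibration. You have correctly located the non-trivial content in that last step.
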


\begin{remark}\label{rmk:order-divisor}\mbox{}
	An important example of an integrable function is induced by an effective Cartier divisor $D$ on $X$.
	We define $\ord_tD:\L(X)\to\NN\cup\{\infty\}: \varphi\mapsto \ord_t f_D(\varphi)$, where $f_D$ is a
	local equation of $D$ in a neighborhood of the origin $\tau_0(\varphi)$ of $\varphi$. Note that
	$\ord_t D(\varphi)=\infty$ if and only if $\varphi\in\L(D^{\red})$ and $\ord_t D(\varphi)=0$ if and
	only if $\tau_0(\varphi)\notin D^{\red}$. It is not difficult to prove that $\LL^{-\ord_tD}$ is
	integrable on $\L(X)$. When $X$ and $Y$ are both smooth and $h$ is induced by a proper birational
	morphism $h: Y\to X$, the change of variables formula can be written as
	\begin{equation}\label{eq:chg_var_div}
	\int_A \LL^{-\ord_tD}\dd\mu=\int_B \LL^{-\ord_t(h^\ast D+K_{h})}\dd\mu.
	\end{equation}
	Here $h^\ast D$ denotes the pullback of $D$ and $K_h$ denotes the relative canonical divisor.
	
Consider an effective $\QQ$-Cartier divisor $D$ in $X$, and say $r D$ is Cartier. In this case we define
\[
\ord_t D=\frac{1}{r} \ord_t (rD).
\]
\end{remark}

\subsection{Arc spaces on quotient singularities}\label{sc:arc-spaces}\mbox{}

Let $d\geq1$ be an integer and fix $\zeta$ a primitive $d$th root of unity in $\CC$. Let $G$ be a finite
subgroup of $\GL_n(\CC)$ of order $d$. For any $\gamma\in G$, there exists a basis $\{b_i^\gamma\}_{1\leq i\leq n}$ of
$\CC^n$ such that $\gamma$ is a diagonal matrix of the form
\begin{equation}\label{eq:diag_matrix}
\left(\begin{array}{ccc}
\zeta^{e_{\gamma,1}} & & \\ & \ddots & \\ & & \zeta^{e_{\gamma,n}}
\end{array}\right)
\end{equation}
with $1\leq e_{\gamma,i}\leq d$ for $i=1,\ldots, n$.

Furthermore, given an integer vector $\kbf=(k_1,\ldots,k_n)$, we define the map
\[
\begin{array}{cccl}
w_\kbf: & G & \longrightarrow & \frac{1}{d}\ZZ \\
& \gamma & \mapsto & w_\kbf(\gamma) = \displaystyle \frac{1}{d} \sum_{i=1}^{n} k_i e_{\gamma,i}.
\end{array}
\]
The \textit{weight} of  $\gamma\in G$ is the value $w(\gamma) = w_{(1,\ldots,1)}(\gamma)$. Note that in general
$w_\kbf(\gamma)$ is a rational number.

Analogously, define the map
\[
\begin{array}{cccl}
\varpi_\kbf: & G & \longrightarrow & \frac{1}{d}\ZZ\\
& \gamma & \mapsto & \varpi_\kbf(\gamma) = \displaystyle \frac{1}{d} \sum_{i=1}^{n} k_i\varepsilon_{\gamma,i}, \\
\end{array}
\]
where the numbers $\varepsilon_{\gamma,1},\ldots,\varepsilon_{\gamma,n}$ are taken as in (\ref{eq:diag_matrix})
but now verifying $0\leq \varepsilon_{\gamma,i}\leq d-1$, for $i=1,\ldots, n$. Note that the relation
$e_{\gamma,i}+\varepsilon_{\gamma^{-1},i}=d$ holds for any $i=1,\ldots,n$, and that we have
\begin{equation}\label{eq:weights}
w_\kbf(\gamma)+\varpi_\kbf(\gamma^{-1})=\sum_{i=1}^{n}k_i,
\end{equation}
for any $\gamma\in G$.

Now, we let $G$ act on $\CC^n$ and consider the projection morphism $\rho: \CC^n\rightarrow U = \CC^n/G$ as
a morphism of complex varieties. In~\cite{DL02} the structure of the arcs over  $U$ centered
at the origin is studied. We summarize in this section some of their results.

Let $\tilde{\Delta}$ be the closed subvariety of $\CC^n$ consisting of the closed points having a nontrivial
stabilizer and let $\Delta$ be its image under $\rho$ in $U$. We denote by $\L(U)^{\reg}$
(resp.~$\L^{1/d}(\CC^n)^{\reg}$) the set $\L(U)\setminus\L(\Delta)$
(resp.~$\L^{1/d}(\CC^n)\setminus\L^{1/d}(\tilde{\Delta})$). We define similarly
$\L_W(U)^{\reg}$ and $\L_W^{1/d}(\CC^n)^{\reg}$, for a subvariety $W$ of $U$ (resp.~$\CC^n$).

Note that an arc $\varphi\in\L(U)_0^{\reg}$ can be lifted to an arc $\tilde{\varphi}\in\L^{1/d}(\CC^n)_0^{\reg}$
in such a way that there is a unique element $\gamma\in G$ verifying
\begin{equation}\label{eq:conj-arcs}
\tilde{\varphi}(\zeta \, t^{1/d})=\gamma \tilde{\varphi}(t^{1/d}).
\end{equation}
When $\tilde{\varphi}$ is replaced by another arc in $\L^{1/d}(\CC^n)_0^{\reg}$, the element $\gamma$ in \eqref{eq:conj-arcs}
will be replaced by a conjugate. Denote by $\L(U)_{0,\gamma}^{\reg}$ the set of arcs in $\L(U)_0^{\reg}$ such that
there exists $\tilde{\varphi}$ satisfying \eqref{eq:conj-arcs}; then
$\L(U)_{0,\gamma}^{\reg}=\L(U)_{0,\gamma^\prime}^{\reg}$ if and only if  $\gamma$ and $\gamma^\prime$ belong to
the same conjugacy class of $G$. Summarizing, we have
\begin{equation}\label{eq:decomposition-arcs}
\L(U)^{\reg}_0=\bigsqcup_{\gamma \in \Conj(G)}\L(U)^{\reg}_{0,\gamma},
\end{equation}
where $\Conj(G)$ denotes the conjugacy classes of $G$. Moreover, for a fixed $\gamma\in G$, we have that an arc
$\tilde{\varphi}\in\L^{1/d}(\CC^n)^{\reg}$ projects to an arc $\varphi$ in $\L(U)_{0,\gamma}^{\reg}$ if and only if $\tilde{\varphi}$ it is
in the $G$-orbit of some arc $\tilde{\vartheta}$ in $\L^{1/d}(\CC^n)^{\reg}$ of the form
\begin{equation}\label{eq:lifted-arcs}
\tilde{\vartheta}(t^{1/d})=(t^{e_{\gamma,1}/d}\varphi_1(t),\ldots,t^{e_{\gamma,n}/d}\varphi_n(t)).
\end{equation}

In fact, when we fix an element $\gamma\in G$ and a base  $\{b_i^\gamma\}$ of $\CC^n$ such that $\gamma$ is diagonal,
\eqref{eq:lifted-arcs} gives rise to the following $\CC[t]$-morphism of varieties:
\begin{equation}\label{eq:lambda}
	\begin{array}{cccc}
		\lambda_\gamma: & \L(U) & \longrightarrow & \L(U)\\
				& (\varphi_1(t),\ldots,\varphi_n(t))
				& \mapsto & (t^{e_{\gamma,1}/d}\varphi_1(t),\ldots,t^{e_{\gamma,n}/d}\varphi_n(t)).
	\end{array}
\end{equation}
The map  $\lambda_\gamma$ gives  a bijection between $\L(U)^{\reg}_{0,\gamma}$ and
$\left(\L(\CC^n)/G_\gamma\right)\cap\lambda_\gamma^{-1}\left(\L(U)^{\reg}\right)$, where $G_\gamma$ denotes
the centralizer of $\gamma\in G$.
In particular,  $\L(U)^{\reg}_{0,\gamma}$ is a semi-algebraic subset of $\L(U)$ and since
$\lambda_\gamma^{-1}\L(\Delta)$ is a subvariety of dimension less than $n$, the semi-algebraic sets
$\L(\CC^n)/G_\gamma$ and $\left(\L(\CC^n)/G_\gamma\right)\cap\lambda_\gamma^{-1}\left(\L(U)^{\reg}\right)$
are equal up to sets of measure zero. If $G$ is abelian, then $G_\gamma=G$ for any $\gamma\in G$,
and everything is reduced to the study of $\L(\CC^n)/G$.

\subsection{Motivic $\QQ$-Gorenstein measure}\mbox{}\label{subsec:mot_orb_measure}

Let $X$ be a $\QQ$-Gorenstein algebraic variety over $\CC$ of pure dimension $n$ having at most log
terminal singularities. Denote by $\O_X$ the structural sheaf of $X$ and by $\omega_X$ the
canonical sheaf $j_{*} (\Omega_{X^{\reg}}^n)$ where $j:X^{\reg} \hookrightarrow X$ is the inclusion
of the smooth part of~$X$ and $\Omega_{X^{\reg}}^n$ is  the $n$th exterior power of the sheaf of
differentials over $X^{\reg}$. Then $\omega_X^{[r]} := j_{*} ((\Omega_{X^{\reg}}^n)^{\otimes r})$ is an invertible sheaf,
i.e.,~a locally free $\O_X$-module of rank $1$, for some $r \geq 1$.

Now we consider the ring $\Mhatloc [\LL^{1/r}]$ constructed by extending  $\Mhatloc$ by $[\LL^{1/r}]$
and then completing with respect to the filtration generated by
\[
\left\{[V] \LL^{-i/r}\ \left|\ \dim V - \frac{i}{r} \leq - \frac{m}{r}\right.\right\}.
\]
This way one can show that if $A$ is a semi-algebraic subset of $\L(X)$ and $\alpha: A \to \frac{1}{r} \ZZ \cup
\{\infty\}$ is a simple function, then $\LL^{-\alpha}$ is integrable in $\Mhatloc [\LL^{1/r}]$. In particular,
by using a resolution of singularities for $X$ and Theorem \ref{thm:chg_var_DL}, one has that
$\LL^{-\frac{1}{r}\ord_t \omega_X^{[r]}}$ is integrable on $A$. Thus the following definition
of a $\QQ$-\emph{Gorenstein measure} for a measurable subset $A$ of $\L(X)$ makes sense.
\begin{defi}[Motivic $\QQ$-Gorenstein measure]\label{def:Gor-measure2}
	Using the notation above, one defines
	\[
	\mu^{\Gor}(A) = \int_A \LL^{-\frac{1}{r}\ord_t \omega_X^{[r]}} \dd \mu_{\L(X)} \in \Mhatloc[\LL^{1/r}].
	\]
\end{defi}
Note that this notion is called  \emph{Gorenstein measure} in \cite[Section~7.3.4]{ChNS18}. As a word of caution we present an example to show that this definition differs from the \emph{orbifold measure} $\mu^{\orb}$ given in~\cite[Section~3.7]{DL02} when the variety is a quotient singularity by the action of a finite linear group. For any $U=\CC^n/G$ with $G\subset\GL_n(\CC)$ of finite order, it is proven in \cite{DL02} that
\begin{equation}\label{eqn:mu_orb}
	\mu^{\orb}(\L(U)_0) = \sum_{\gamma\in\Conj(G)} \LL^{-w(\gamma)}.
\end{equation}

\begin{ex}\label{ex:Gor-measure}
Let $U_i = \CC^2/G_i$, $i=1,2$, be the quotient spaces given by $G_1 = \{ \id_2, -\id_2 \}$  and
$G_2 = \big\{ \id_2, \begin{psmallmatrix} i & 0 \\ 0 & -1 \end{psmallmatrix},
\begin{psmallmatrix} -1 & 0 \\ 0 & 1 \end{psmallmatrix}, \begin{psmallmatrix} -i & 0 \\ 0 & -1 \end{psmallmatrix}  \big\}$, respectively. The elements of $G_1$ have weight $2$ and $1$ and those of $G_2$ have weight $2$, $3/4$, $3/2$ and $5/4$, respectively. According to (\ref{eqn:mu_orb}), we have
$$
  \mu^{\orb}(\L(U_1)_0) = \LL^{-2} + \LL^{-1} = \LL^{-2} (1 + \LL)
$$
and
$$
\mu^{\orb}(\L(U_2)_0) = \LL^{-2} + \LL^{-3/4} + \LL^{-3/2} + \LL^{-5/4}
= \LL^{-2} ( 1 + \LL^{1/2} ) ( 1 + \LL^{3/4} ).
$$
However, the map $U_2 \to U_1$ defined by $[(x,y)] \mapsto [(x^2,y)]$ provides an isomorphism
between $U_2$ and $U_1$. This shows that the orbifold measure depends not only on the algebraic
structure of the variety but also on the group itself. Note that
$$
\mu^{\Gor}(\L(U_i)_0) = \LL^{-2} (1 + \LL) = \mu^{\orb}(\L(U_1)_0), \quad i=1,2,
$$
and that $\mu^{\orb}(\L(U_2)_0)$ corresponds in $U_1$ with the integral
$$
\int_{\L(U_1)_0^{\reg}} \LL^{-\ord_t x^{-1/2}} \dd \mu_{\L(U_1)}^{\Gor}.
$$
This is a consequence of Theorem~\ref{thm:intro_main1} and the fact that $\omega_{U_1}$ is induced by the $2$-form
$\dd x \wedge \dd y \in \Omega_{\CC^2}^2$ while $\omega_{U_2}$ is induced by $x \dd x \wedge \dd y$.

The difference between the two orbifold measures above is also related to the difference between two instances of Batyrev's \emph{stringy $E$-function}~\cite{Batyrev99}.
Let $X$ be either $U_1$ or $U_2$ and consider the projections $\CC^2\to U_i$, $i=1,2$. Using the notation of~\cite{Batyrev99}, the discriminant $\Delta_{U_1}=0$ while $\Delta_{U_2}=\frac{1}{2}\ell$ where $\ell$ is the divisor in $U_2$ given by $x=0$. Then $E_{\text{st}}(X,0;u,v)=H\left(\mu^{\orb}(\L(U_1)_0)\right)$ and $E_{\text{st}}(X,\frac{1}{2}\ell;u,v)=H\left(\mu^{\orb}(\L(U_2)_0)\right)$, where $E_{\text{st}}$ is the stringy $E$-function associated to a klt pair, and $H$ is the map induced by the Hodge realization $\KVarC\to\ZZ[u,v]$.
\end{ex}

\subsection{Quotient spaces and $\QQ$-resolutions of singularities}\mbox{}\label{sec:Qres}

Let us introduce some notation and notions in the context of $\QQ$-resolutions of singularities. We refer to~\cite{AMO14b} for the details, see also~\cite{Steenbrink77}.

\begin{defi}\label{defi:QNC}
Let $X$ be a $V$-manifold with abelian quotient singularities. A hypersurface $D$ on $X$ is said to have
\emph{$\QQ$-normal crossings} if it is locally analytically isomorphic to the quotient of a union of coordinate hyperplanes under
a diagonal action of a finite abelian group $G \subset \GL_n(\CC)$.
\end{defi}

Let $U=\CC^n/G$ be an abelian quotient space. Consider $H\subset U$ an analytic subvariety of codimension one.
\begin{defi}\label{defi:Qres}
	An \emph{embedded $\QQ$-resolution} of $(H,0)\subset (U,0)$ is a proper analytic map $\pi:Y\to (U,0)$ such that
	\begin{enumerate}
		\item $Y$ is a $V$-manifold with abelian quotient singularities,
		
		\item $\pi$ is an isomorphism over $Y\setminus \pi^{-1}(H_{\sing})$,
		
		\item $\pi^{-1}(H)$ is a hypersurface with $\QQ$-normal crossings on $Y$.
	\end{enumerate}
\end{defi}

\begin{remark}
	Let $(H,0)$ be the hypersurface defined by a non-constant analytic germ $f:(U,0)\to (\CC,0)$, and let
	$\pi:Y\to(U,0)$ be an embedded $\QQ$-resolution of $(H,0)$. Then $\pi^{-1}(H)$ is locally given by a function
	of the form
	\[
	x_1^{a_1}\cdots x_n^{a_n}:\CC^n/G_0\longrightarrow\CC,
	\]
	for some finite abelian group $G_0$. Moreover, there is a natural finite stratification
	$Y = \bigsqcup_{k \geq 0} Y_{k}$ such that the multiplicities $a_1,\ldots,a_n\in\NN$ and the action of $G_0$
	above is constant along each stratum $Y_k$, i.e.,~it does not depend on the chosen point $p\in Y_k$.
\end{remark}

For every finite group $G\subset\GL_n(\CC)$, denote by $G_{\text{big}}$ the normal subgroup generated by all rotations around hyperplanes. Then, the quotient space $\CC^n/G_{\text{big}}$ is isomorphic to $\CC^n$ since the $G_{\text{big}}$-invariant polynomials form a polynomial algebra. The choice of a basis in this algebra determines an isomorphism between the group $G/G_{\text{big}}$ and another group in $\GL_n(\CC)$ which does not contain non-trivial rotations around hyperplanes. The latter gives rise to the following notion.

\begin{defi}\label{def:small}
	A finite group $G\subset\GL_n(\CC)$ is called \emph{small} if no element of $G$ has 1 as an eigenvalue of multiplicity exactly $n-1$.
\end{defi}

It is well known that the conjugacy class of an small group $G\subset\GL_n(\CC)$ determines $\CC^n/G$ up to isomorphism, and vice versa.\\

For $\bd = {}^t(d_1 \ldots d_r)$ we denote by
$C_{\bd} = C_{d_1} \times \cdots \times C_{d_r}$ the finite
abelian group written as a product of finite cyclic groups, that is, $C_{d_i}$
is the cyclic group of $d_i$-th roots of unity in $\CC$. Consider a matrix of weight
vectors
\begin{align*}
	A & = (a_{ij})_{i,j} = [\ba_1 \, | \, \cdots \, | \, \ba_n ] \in \text{Mat}_{r \times n} (\ZZ), \\ 
	\ba_j & = {}^t (a_{1 j}\dots a_{r j}) \in \text{Mat}_{r\times 1}(\ZZ),
\end{align*}
and the action
\begin{equation*}
	\begin{array}{cr}
	( C_{d_1} \times \cdots \times C_{d_r} ) \times \CC^n  \longrightarrow  \CC^n,&\xi_\bd = (\xi_{d_1}, \ldots, \xi_{d_r}), \\[0.15cm]
	\big( \xi_{\bd} , \mathbf{x} \big) \mapsto (\xi_{d_1}^{a_{11}} \cdot\ldots\cdot
	\xi_{d_r}^{a_{r1}}\, x_1,\, \ldots\, , \xi_{d_1}^{a_{1n}}\cdot\ldots\cdot
	\xi_{d_r}^{a_{rn}}\, x_n ), & \mathbf{x} = (x_1,\ldots,x_n).
	\end{array}
\end{equation*}

Note that the $i$-th row of the matrix $A$ can be considered modulo $d_i$. The
set of all orbits $\CC^n / C_\bd$ is called the ({\em cyclic}) {\em quotient space of
type $(\bd;A)$} and it is denoted by
$$
  X(\bd; A) = X \left( \begin{array}{c|ccc} d_1 & a_{11} & \cdots & a_{1n}\\
\vdots & \vdots & \ddots & \vdots \\ d_r & a_{r1} & \cdots & a_{rn} \end{array}
\right).
$$
When $n=2$ and $r=1$, the space $X(d;a,b)$ is customarily denoted by $\frac{1}{d}(a,b)$.
\begin{ex}
	With the notation of this section, $U_1$ and $U_2$ in Example \ref{ex:Gor-measure}, are written as $\frac{1}{2}(1,1)$ and $\frac{1}{4}(1,2)$, respectively.
\end{ex}

\bigskip
\section{Zeta function on \texorpdfstring{$\QQ$}{Q}-Gorenstein varieties
and change of variables formula}\label{sc:def_zeta_function}

Let $X$ be a $\QQ$-Gorenstein complex algebraic variety of pure dimension $n$, with at most log
terminal singularities. Consider two $\QQ$-Cartier divisors $D_1, D_2$ in $X$, and take an integer $r$ such that $r D_1$ and $rD_2$ are Cartier
and $\omega_X^{[r]}$ is invertible.
\begin{defi}\label{def:motivic_zeta_function2}
Let $W$ be a subvariety of $X$ and consider $\L(X)_W = \tau_0^{-1}(W)$. We also set
$\L(X)_W^{\reg}=\L(X)_W\setminus\L(X_{\sing})$. The \emph{motivic Gorenstein zeta function}
of the pair $(D_1,D_2)$ with respect to $W$ is
\[
\Zmot_{,W}(D_1,D_2; s) = \int_{\L(X)_{W}^{\reg}} \LL^{-(\ord_t D_1\cdot  s + \ord_tD_2)} \dd \mu^{\Gor}_{\L(X)},
\]
whenever the right-hand side converges in $\Mhatloc \llb\LL^{- s/r}\rrb$. Here $\LL^{- s}$ and $\LL^{- s/r}$ are just variables, where $\LL^{- s/r}$
should be understood as $(\LL^{- s})^{1/r}$.
The divisor $D_2$ can be thought of as a divisor associated
with a maximal degree form in $X$. When $W$ is just a point $P \in X$, the zeta function is simply called
the \emph{local motivic zeta function} at $P$ and it is denoted by $\Zmot_{,P}(D_1,D_2; s)$.
\end{defi}

Note that when $\Supp D_2\subset \Supp D_1$, then $\Zmot_{,W}(D_1,D_2; s)\in \Mloc \llb\LL^{- s/r}\rrb$,
see e.g.~\cite[Section~1.5]{Veys01}. In addition, if $D_1$ and $D_2 + K_X$ are Cartier and the integral
defining the zeta function converges, then $\Zmot_{,0}(D_1,D_2; s)$ is an element of
$\Mhatloc\llb\LL^{- s}\rrb$.

The definition of $\Zmot_{,W}(D_1,D_2; s)$ is a generalization of the classical motivic zeta function given
in~\cite{DL98,DL99}, or in~\cite{NemethiVeys12, CauwbergsVeys17}, where different $D_2$ associated with
$n$-differential forms over a smooth $X$ are considered.

The change of variables formula given in Theorem~\ref{thm:intro_orbs_chg_vars} is a special case
of~\cite[Theorem 1.16]{DL02} that is well suited for our Gorenstein zeta function. In particular, it can be considered as a version of formula \eqref{eq:chg_var_div} for $\QQ$-Gorenstein varieties.
Now we are ready to present its proof.

\begin{proof}[Proof of Theorem~\ref{thm:intro_orbs_chg_vars}]
Assume without loss of generality that $\omega_X^{[r]}$ and $\omega_Y^{[r]}$ are invertible,
that is, the same $r$ is valid for both sheaves. For convenience, we also denote by $\pi$ the map at the level of arc spaces $\L(Y)\to \L(X)$. We claim that (it will be proven later)
\begin{equation}\label{eq:jacobian-lemma}
- \frac{\ord_t \omega_X^{[r]}}{r} \circ \pi - \ord_t \Jac_{\pi} + \frac{\ord_t \omega_Y^{[r]}}{r}
= - \ord_t K_\pi.
\end{equation}

By definition of the Gorenstein measure, the zeta function $\Zmot_{,W}(D_1,D_2; s)$ of the pair
$(D_1,D_2)$ with respect to $W$ is
$$
\int_{\L(X)_{W}^{\reg}} \LL^{-(\ord_t D_1\cdot  s + \ord_tD_2)} \dd \mu^{\Gor}_{\L(X)}
= \int_{\L(X)_{W}^{\reg}} \LL^{-(\ord_t D_1\cdot  s + \ord_tD_2)} \LL^{-\frac{1}{r}\ord_t \omega_X^{[r]}} \dd \mu_{\L(X)}.
$$
Applying the change of variables Theorem~\ref{thm:chg_var_DL}, one gets the integral
$$
\int_{\pi^{-1} \L(X)_{W}^{\reg}} \LL^{-((\ord_t D_1\circ \pi) \cdot  s + (\ord_tD_2 \circ \pi))}
\LL^{-\frac{1}{r}\ord_t \omega_X^{[r]} \circ \pi - \ord_t \Jac_{\pi}} \dd \mu_{\L(Y)},
$$
which in terms of the Gorenstein measure on $\L(Y)$ becomes
\begin{equation}\label{eq:long-formula}
\int_{\pi^{-1} \L(X)_{W}^{\reg}} \LL^{-((\ord_t D_1\circ \pi) \cdot  s + (\ord_tD_2 \circ \pi))}
\LL^{-\frac{1}{r}\ord_t \omega_X^{[r]} \circ \pi - \ord_t \Jac_{\pi}+ \frac{1}{r}\ord_t \omega_Y^{[r]}}
\dd \mu^{\Gor}_{\L(Y)}.
\end{equation}
Since $\ord_t D_i \circ \pi = \ord_t \pi^{*} D_i$ ($i=1,2$),
$- \frac{\ord_t \omega_X^{[r]}}r \circ \pi - \ord_t \Jac_{\pi} + \frac{\ord_t \omega_Y^{[r]}}r
= - \ord_t K_\pi$, and $\pi^{-1} \L(X)_W^{\reg}$ is equal to $\L(Y)_{\pi^{-1}W}^{\reg}$ up to a set of measure
zero, the last expression~\eqref{eq:long-formula} equals $\Zmot_{,\pi^{-1}W}(\pi^{*}D_1,\pi^{*}D_2+K_\pi; s)$
and the result follows.

It remains to show~\eqref{eq:jacobian-lemma} -- although we believe this result is well known by
specialists, cf.~\cite[Lemma 2.16]{Yasuda04}, for the sake of completeness we also present the proof here.
Let $\psi$ be an arc on $Y$ and $\varphi = \pi(\psi)$.
Recall from Definition~\ref{def:order-jacobian} that we denote by $L_Y$ and $L_X$ the images of $\psi^{*} (\Omega_Y^n)$ and
$\psi^{*} \pi^{*} (\Omega_X^n)$ in $V=\psi^\ast(\Omega_Y^n)\otimes_{\CC\llb t\rrb}\CC((t))$, respectively, and that
$L_X = t^{\ord_t \Jac_\pi(\psi)}L_Y$. Hence $L_X^{\otimes r} = t^{r \ord_t \Jac_\pi(\psi)}L_Y^{\otimes r}$, as lattices in $V^{\otimes r}$.

Let further $\Lambda_Y$ and $\Lambda_X$ denote the images in $V^{\otimes r}$  of $\psi^{*} (\omega_Y^{[r]})$ and $\psi^{*}\pi^{*}(\omega_X^{[r]})$, respectively. Hence $\Lambda_Y=t^{\ord_t \omega_Y^{[r]}(\psi)}L_Y^{\otimes r}$ and $\Lambda_X=t^{\ord_t \omega_X^{[r]}(\varphi)}L_X^{\otimes r}$ in $V^{\otimes r}$.
Combining these equalities yields
\begin{equation}\label{eq:psi-phi-jacobian1}
\Lambda_Y= t^{(\ord_t \omega_Y^{[r]} - r \ord_t \Jac_\pi - \ord_t \omega_X^{[r]}\circ \pi )(\psi) } \Lambda_X .
\end{equation}

On the other hand, since $\omega_Y^{[r]} = \O_Y (rK_\pi) \otimes \pi^{*} \omega_X^{[r]}$, we have that
\begin{equation}\label{eq:psi-phi-jacobian2}
\Lambda_Y= t^{-r\ord_t K_\pi (\psi) } \Lambda_X
\end{equation}
(note that the minus sign is caused by the explicit correspondence between divisors and invertible sheaves).

Combining~\eqref{eq:psi-phi-jacobian1} and~\eqref{eq:psi-phi-jacobian2} finishes the proof.
\end{proof}

In the following example, we illustrate the relation between $\ord_t K_\pi$ and $\ord_t\Jac_{\pi}$ as in~(\ref{eq:jacobian-lemma}) above in the proof of Theorem~\ref{thm:intro_orbs_chg_vars}.

\begin{ex}\label{ex:jacobian}
Let $\pi: Y\to X=\CC^2$ be the weighted blow-up with weights $(2,3)$. Thus, $Y$ is a $V$-manifold with two singular points $P$ and $Q$ of type $\frac{1}{2}(1,1)$ and $\frac{1}{3}(1,1)$, respectively, located on the exceptional divisor $E=\pi^{-1}(0)$. In this case, the relative canonical divisor is $K_{\pi}=4E$.  Let us fix coordinates $(x,y)$ on $X$ around the origin and coordinates $(u,v)$ on $Y$ around $Q$, such that $\pi$ is given by the substitution $x=uv^2$ and $y=v^3$. More details about weighted blow-ups are given e.g.~in~\cite[Section~4.2]{AMO14b}. %

Consider the arc $\psi\in\L(Y)^{\reg}_Q$ induced by $(t^{k_1/3}, t^{k_2/3})$ where $k_1\equiv k_2\mod 3$. According to Definition~\ref{def:order-jacobian}, in order to compute $\ord_t\Jac_{\pi}(\psi)$, one needs to study the images in $V$ of $\psi^*\pi^*(\Omega_{X}^2)$ and $\psi^*(\Omega_{Y}^2)$, denoted by $L_X$ and $L_Y$, respectively.
Note that ``morally'' the Kähler differentials $\Omega_Y^2$ on $Y$ are locally generated around $Q$ by $\{u^iv^{4-i}\dd u\wedge\dd v\}_{i=0}^4$. (More precisely, the form $\dd u\wedge\dd v$ is not well defined on $Y$. For the exact computation one could work with $(\Omega_Y^2)^{\otimes 3}$, locally generated by $\{u^{3i}v^{3(4-i)}(\dd u\wedge\dd v)^{\otimes 3}\}_{i=0}^4$. But the formal computation with $\dd u\wedge\dd v$ is more practical and yields the same result.)  Similarly, since $X=\CC^2$, the image of $\pi^*\Omega_{X}^2$ in $\Omega_{Y}^2$  is locally generated by $v^4\dd u\wedge\dd v$.
Thus, fixing a suitable generator $g$ of the vector space $V$, we have that $L_Y$ is generated by $t^{m}g$, where $m=\min\set{\frac{k_1i+k_2(4-i)}{3} \mid i=0,\ldots,4}=\frac{4}{3}\min\{k_1,k_2\}$, and that $L_X$ is  generated by $t^{n}g$, where $n=\frac{4k_2}{3}$. Hence $L_X=t^e L_Y$, where  $$\ord_t\Jac_{\pi}(\psi)=e=n-m=\frac{4}{3}\left(k_2-\min\{k_1,k_2\}\right)\in\NN.$$
On the other hand, $\ord_t K_\pi(\psi)=\frac{4k_2}{3}$, which is much easier to calculate.

In order to verify~(\ref{eq:jacobian-lemma}), note that $\ord_t\omega_X^{[3]}$ is the zero function and that $\ord_t\omega_Y^{[3]}(\psi) = -4\min\{k_1,k_2\}$, using the calculation above and the fact that $(\dd u\wedge\dd v)^{\otimes 3}$ generates $\omega_Y^{[3]}$.
\end{ex}

\bigskip
\section{Quotient singularities under finite abelian groups}
\label{sc:abelian_quotient_sing}

The main aim of this section is to prove Theorem~\ref{thm:intro_main1} and Theorem~\ref{thm:intro_main2}.
Recall the notation presented in the introduction and in Section~\ref{sc:arc-spaces}. Let $U = \CC^n/G$ where $G \subset \GL_n(\CC)$ is a finite abelian
small subgroup acting diagonally as in~\eqref{eq:our_diag_matrix}. Assume $D_1$ and $D_2$ are $\QQ$-divisors
given by $x_1^{N_1} \cdots x_n^{N_n}$ and $x_1^{\nu_1-1} \cdots x_n^{\nu_n-1}$ and denote
$\Nbf = (N_1,\ldots,N_n), \nubf = (\nu_1,\ldots,\nu_n) \in \frac{1}{d} \ZZ_{\geq 0}^n$, where $d = |G|$.
This means that $dD_1$ and $dD_2$ are Cartier. Given $\kbf \in \QQ^n$ and $\gamma \in G$ we refer
to Section~\ref{sc:arc-spaces} for the definition of $w_{\kbf}(\gamma)$
and $\varpi_{\kbf}(\gamma)$ and the relation between them. In particular, the notation
$\one = (1,\ldots,1)$ and $\zero = (0,\ldots,0)$ will be used below.

\begin{proof}[Proof of Theorem~\ref{thm:intro_main1}.]
Let us first use the decomposition $\L(U)_0^{\reg} = \bigsqcup_{\gamma\in G} \L(U)_{0,\gamma}^{\reg}$
given in~\eqref{eq:decomposition-arcs} so that $\Zmot_{,0}(D_1,D_2;s)$ splits into $d$ different integrals
and thus, for a fixed $\gamma \in G$, it is enough to compute
\begin{equation}\label{eq:step0}
\int_{\L(U)_{0,\gamma}^{\reg}} \LL^{-(\ord_t D_1 \cdot  s + \ord_t D_2 )} \dd\mu^{\Gor}_{\L(U)}.
\end{equation}

Consider the morphism $\lambda_\gamma: U \otimes \CC[t] \to U \otimes \CC[t]$ described in~\eqref{eq:lambda}
that induces a $\CC[t]$-morphism on the arc spaces $\lambda_\gamma: \L(U) \to \L(U)$ defined by
$\left[(\varphi_1(t),\ldots,\varphi_n(t))\right] \mapsto \left[ (t^{e_{\gamma,1}/d}\varphi_1(t),
\ldots,t^{e_{\gamma,n}/d}\varphi_n(t)) \right]$. Note that $\lambda^{*}_{\gamma} D_1 = t^{w_{\Nbf}(\gamma)} D_1$,
$\lambda^{*}_{\gamma} D_2 = t^{w_{\nubf-\one}(\gamma)} D_2$, and $\lambda^{*}_{\gamma} \omega_U = t^{w_\one(\gamma)} \omega_U$,
since $G$ a small subgroup of $\GL_n(\CC)$. Using now that
\begin{enumerate}
	\item $\ord_t D_i \circ \lambda_{\gamma} = \ord_t \lambda_{\gamma}^{*} D_i$ ($i=1,2$),
	
	\item analogous arguments as in the proof of~(\ref{eq:jacobian-lemma}) yield
	$$
	- \frac{\ord_t \omega_U^{ [d] }}{d} \circ \lambda_{\gamma} - \ord_t \Jac_{\lambda_{\gamma}}
	+ \frac{\ord_t \omega_U^{ [d] }}{d} = - \frac{e_{\gamma,1} + \cdots + e_{\gamma,n}}{d} = - w_{\one}(\gamma),
	$$
	
	\item $w_{\nubf-\one}(\gamma) + w_{\one}(\gamma) = w_{\nubf}(\gamma)$,
\end{enumerate}
\noindent applying Theorem~\ref{thm:chg_var_DL} to $\lambda_{\gamma}$ converts the integral~\eqref{eq:step0} into
\begin{equation}\label{eq:common-factor}
\LL^{-(w_\Nbf(\gamma)\cdot s + w_\nubf(\gamma))} \int_{\L(\CC^n)/G} \LL^{-(\ord_t D_1 \cdot  s + \ord_t D_2)}
\dd\mu^{\Gor}_{\L(U)}.
\end{equation}

Now, as in the case of $\CC^n$, the idea of the proof is to decompose $\L(\CC^n)/G$ into different pieces so that
$\ord_t D_1$ and $\ord_t D_2$ remain constant and hence the computation of the previous integral is reduced to
the computation of the $\QQ$-Gorenstein measure of the corresponding pieces.

To do so, let us consider the natural partition on $\L(\CC^n)$ given by the order function on each coordinate.
This partition is compatible with the group action and it gives rise to a partition on $\L(\CC^n)/G$. That is,
\begin{equation}\label{eqn:partition_LCG}
\L(\CC^n)/G = A_{\infty} \sqcup \bigsqcup_{\kbf \in \ZZ_{\geq0}^n} A_{\kbf},
\end{equation}
where $\kbf = (k_1,\ldots,k_n)$, $A_{\kbf} = \{ [\varphi] = [(\varphi_1,\ldots,\varphi_n)] \in \L(\CC^n)/G
\mid \ord_t (\varphi_i) = k_i,\, \forall i \}$ and $A_{\infty}$ is the complement of
$\bigsqcup_{\kbf\in\ZZ_{\geq0}^n} A_{\kbf}$ in $\L(\CC^n)/G$. Since $A_{\infty}$ consists of all the elements
$[\varphi]$ such that $\varphi_i = 0$ for some $i = 1,\ldots,n$, its measure is zero.

\vspace{0.35cm}

{\sc Assertion. $\mu^{\Gor}_{\L(U)}(A_{\kbf})=\LL^{-n-\sum_i k_i}(\LL-1)^n$}.

\begin{proof}[Proof of the assertion.]
For a given $\kbf \in \ZZ_{\geq 0}^n$, consider the morphism $\lambda_{\kbf}: U \otimes \CC[t] \to U \otimes \CC[t]$
that induces a $\CC[t]$-morphism on the arc spaces $\lambda_{\kbf}: \L(U) \rightarrow \L(U)$ defined by
$\left[ (\varphi_1(t), \ldots, \varphi_n(t)) \right] \mapsto \left[ (t^{k_{1}}\varphi_1(t),\ldots,t^{k_{n}}\varphi_n(t)) \right]$.
This map satisfies
$$
- \frac{\ord_t \omega_U^{ [d] }}{d} \circ \lambda_{\kbf} - \ord_t \Jac_{\lambda_{\kbf}}
+ \frac{\ord_t \omega_U^{ [d] }}{d} = - \sum_{i=1}^n k_i,
$$
again by analogous arguments as before, and it provides a bijection between $A_{\zero}$ and $A_{\kbf}$. Using Theorem~\ref{thm:chg_var_DL} one obtains the relation
$$
\mu^{\Gor}_{\L(U)}(A_{\kbf})
 =\int_{A_{\kbf}}  \dd\mu^{\Gor}_{\L(U)}
 = \LL^{-\sum_i k_i} \int_{A_{\zero}} \dd \mu^{\Gor}_{\L(U)}
 = \LL^{-\sum_i k_i} \mu^{\Gor}_{\L(U)}(A_{\zero}).
$$
To compute the measure of $A_{\zero}$, we use partition~\eqref{eqn:partition_LCG}
and the previous relation:
$$
\mu^{\Gor}_{\L(U)}(\L(\CC^n)/G)  = \sum_{\kbf\in\ZZ_{\geq0}^n} \mu^{\Gor}_{\L(U)}(A_{\kbf})
= \mu^{\Gor}_{\L(U)}(A_{\zero}) \sum_{\kbf\in\ZZ_{\geq0}^n} \LL^{-\sum_i k_i}
=  \mu^{\Gor}_{\L(U)}(A_{\zero})\prod_{i=1}^{n}\frac{1}{1-\LL^{-1}}.
$$
The fact that $\L(\CC^n)/G$ has $\QQ$-Gorenstein measure $1$, see~\cite[Lemma 3.4]{DL02}, concludes the proof of
the assertion. Note that the latter is true without adding any relation in the Grothendieck ring
since the group $G$ is abelian, see~\cite[Lemma 5.1]{Looijenga02}.
\end{proof}

We continue with the proof of Theorem~\ref{thm:intro_main1}. On $A_{\kbf}$ the order of $D_1$ is $\sum_i k_i N_i$ and the order of $D_2$ is $\sum_i k_i (\nu_i-1)$.
From the partition of $\L(\CC^n)/G$ in~\eqref{eqn:partition_LCG} and the assertion above, one gets
\begin{equation}\label{eq:int-LCnG}
\begin{aligned}
& \int_{\L(\CC^n)/G} \LL^{-(\ord_t D_1 \cdot  s + \ord_t D_2)} \dd \mu^{\Gor}_{\L(U)}
= \sum_{\kbf\in\ZZ_{\geq0}^n} \LL^{-\sum_{i=1}^{n} k_i \left(N_i\cdot s + \nu_i-1 \right)} \mu^{\Gor}_{\L(U)}(A_{\kbf}) \\
&= \LL^{-n}(\LL-1)^n \sum_{\kbf\in\ZZ_{\geq0}^n} \LL^{-\sum_{i=1}^{n}k_i\left(N_i\cdot s + \nu_i\right)}
= \LL^{-n}(\LL-1)^n\prod_{i=1}^{n}\frac{1}{1-\LL^{-(N_i\cdot s+\nu_i)}}.
\end{aligned}
\end{equation}

Due to the relations $-w_{\Nbf}(\gamma) = \varpi_{\Nbf}(\gamma^{-1}) - \sum_{i} N_i$ and
$-w_{\nubf}(\gamma) = \varpi_{\nubf}(\gamma^{-1}) - \sum_{i} \nu_i$, coming from~\eqref{eq:weights},
to conclude one simply observes that
\begin{equation}\label{eq:SGNnu}
\sum_{\gamma\in G} \LL^{- \left( w_{\Nbf}(\gamma) \cdot  s + w_{\nubf}(\gamma) \right) }
= \sum_{\gamma\in G} \LL^{\varpi_{\Nbf}(\gamma^{-1}) \cdot  s + \varpi_{\nubf}(\gamma^{-1})
- \sum_i (N_i \cdot  s + \nu_i)}
= S_G(\Nbf,\nubf;s) \LL^{- \sum_i (N_i \cdot  s + \nu_i)}.
\end{equation}
According to~\eqref{eq:common-factor}, one multiplies~\eqref{eq:int-LCnG} and~\eqref{eq:SGNnu} to obtain the desired
formula for the zeta function $\Zmot_{,0}(D_1,D_2; s)$.
\end{proof}

\begin{remark}\label{rk:comparison-DL}
If the group $G \subset \GL_n(\CC)$ is not small, then Theorem~\ref{thm:intro_main1} remains true replacing
$\Zmot_{,0}(D_1,D_2;s)$ by the integral
\begin{equation}\label{eq:non-small-groups}
\int_{\L(U)_{0}^{\reg}} \LL^{-(\ord_t D_1\cdot  s + \ord_t D_2)} \LL^{- \ord_t (\dd x_1 \wedge \cdots \wedge \dd x_n) }
\dd \mu_{\L(U)}.
\end{equation}
Therefore the case $D_1$ and $D_2$ equal to zero,
or equivalently $\Nbf = \zero$ and $\nubf = \one$, gives the relation
\begin{equation}\label{eq:orb-measure}
\int_{\L(U)_{0}^{\reg}} \LL^{- \ord_t (\dd x_1 \wedge \cdots \wedge \dd x_n) } \dd \mu_{\L(U)}
= S_G(\zero,\one;s) \LL^{-n}  = \sum_{\gamma \in G} \LL^{\frac{1}{d}(\varepsilon_{\gamma,1}
+ \cdots + \varepsilon_{\gamma,n})-n}.
\end{equation}
In the notation of~\cite[Section 3.7]{DL02}, $\ord_t (\dd x_1 \wedge \cdots \wedge \dd x_n) = \alpha_U$
and thus~\cite[Theorem 3.6]{DL02} is nothing but the previous formula~\eqref{eq:orb-measure}
which calculates the orbifold measure of $\L(U)_0^{\reg}$.
\end{remark}

\begin{proof}[Proof of Theorem~\ref{thm:intro_main2}]
Consider the notation right before the statement of Theorem~\ref{thm:intro_main2}.
First let us apply Theorem~\ref{thm:intro_orbs_chg_vars} to $\pi: Y \to X$.
Then $\Zmot_{,W} (D_1,D_2;s) = \Zmot_{,\pi^{-1} W} ( \pi^{*} D_1, \pi^{*} D_2 + K_{\pi}; s)$.
The stratification on $Y = \bigsqcup_{k \geq 0} Y_k$ provides a stratification
$\pi^{-1} W = \bigsqcup_{k \geq 0} (Y_k \cap \pi^{-1} W)$ that allows one to decompose
the zeta function into several summands:
$$
\Zmot_{,\pi^{-1} W} ( \pi^{*} D_1, \pi^{*} D_2 + K_{\pi}; s)
= \sum_{k \geq 0} \Zmot_{,Y_k \cap \pi^{-1} W} ( \pi^{*} D_1, \pi^{*} D_2 + K_{\pi}; s).
$$

Let $q_k \in Y_k \cap \pi^{-1} W$ be an arbitrary point. Since the equations of $\pi^{*} D_1$
and $\pi^{*} D_2 + K_{\pi}$ do not depend on the chosen point $q_k \in Y_k$ but only on the
stratum $Y_k$ \,-- in fact they are given by $x_1^{N_{1,k}} \cdots x_n^{N_{n,k}}$ and
$x_1^{\nu_{1,k}-1} \cdots x_n^{\nu_{n,k}-1}$ on $U_k = \CC^n / G_k$ --\,
standard arguments~\cite[Thm.~2.15]{Craw04} show that
$$
\Zmot_{,Y_k \cap \pi^{-1} W} ( \pi^{*} D_1, \pi^{*} D_2 + K_{\pi}; s)
= [Y_k \cap \pi^{-1} W] \Zmot_{,q_k} ( \pi^{*} D_1, \pi^{*} D_2 + K_{\pi}; s).
$$
Now the result follows from the monomial case, i.e., Theorem~\ref{thm:intro_main1}, applied to each stratum $Y_k$, $k \geq 0$.
\end{proof}

\begin{remark}\label{rk:comparison-Veys}
This result was already stated in~\cite[Theorem 6.1]{Veys97} for the Igusa zeta function
in the case of $X=\CC^2$, $D_1$ an effective Cartier divisor, and $D_2=0$.
In loc.~cit., the sum $S_{G_k}(\Nbf_k,\nubf_k;s)$ appears as the (quite complicated) determinant $D_r$
of a $q$-deformation of the intersection matrix of the resolution. However, our $S_{G_k}(\Nbf_k,\nubf_k;s)$ is
a simple explicit formula in terms of the local action of $G_k$ on each stratum.
For instance, if $Y_k=\frac{1}{d}(a,b)$ with $\gcd(d,a,b)=1$ and the divisors $D_1$ and $D_2$ are given
by $x^{N_1} y^{N_2}$ and $x^{\nu_1-1} y^{\nu_2-1}$, then the corresponding sum can simply be expressed as
\begin{equation}\label{eq:sum-dim2}
S_{G}(\Nbf,\nubf;s) = \sum_{i=0}^{d-1} \LL^{\frac{1}{d}\left( (\overline{ia} N_1 + \overline{ib} N_2 )  s
+ (\overline{ia} \nu_1 + \overline{ib} \nu_2) \right)},
\end{equation}
where $\overline{c}$ denotes the class of $c$ modulo $d$ satisfying $0 \leq \overline{c} \leq d-1$.
In Example~\ref{ex:comparison-Veys} below the term $D_r$ is calculated for a concrete case.
\end{remark}

\begin{ex}\label{ex:comparison-Veys}
Let $X = \frac{1}{7}(1,3)$ and consider $f = x^{N_1} y^{N_2}$ and $g = x^{\nu_1-1} y^{\nu_2-1}$, as well as the form
$\omega = g \dd x \wedge \dd y$ such that $f \in \mathcal{O}_X$ and $\omega \in \Omega_X^2$.
Let $D_1$ and $D_2$ be the divisors associated with $f$ and $g$.
According to~\eqref{eq:sum-dim2}, we have
\begin{equation}\label{eq:sum-713}
\begin{aligned}
S_G(\Nbf,\nubf;s) = 1 &+ \LL^{\frac{N_1+3N_2}{7}s+\frac{\nu_1+3\nu_2}{7}} + \LL^{\frac{2N_1+6N_2}{7}s+\frac{2\nu_1+6\nu_2}{7}}
+ \LL^{\frac{3N_1+2N_2}{7}s+\frac{3\nu_1+2\nu_2}{7}} \\
& + \LL^{\frac{4N_1+5N_2}{7}s+\frac{4\nu_1+5\nu_2}{7}} + \LL^{\frac{5N_1+N_2}{7}s+\frac{5\nu_1+\nu_2}{7}}
+ \LL^{\frac{6N_1+4N_2}{7}s+\frac{6\nu_1+4\nu_2}{7}}.
\end{aligned}
\end{equation}
Denote $A=\{x=0\}$ and $B=\{y=0\}$.
Now one resolves the singularity $\pi:\widehat{X} \to X$ using the Hirzebruch-Jung method
and obtains $r=3$ divisors $E_1$, $E_2$, $E_3$. In Figure~\ref{fig:resolution-713},
the numerical data associated with each $E_i$ are given in the format $(N,\nu,\kappa)$,
where $N$ is the multiplicity of $\pi^{*}D_1$, $\nu-1$ is the multiplicity of $\pi^{*}D_2$,
and~$\kappa$ is minus the self-intersection number of the corresponding divisor.
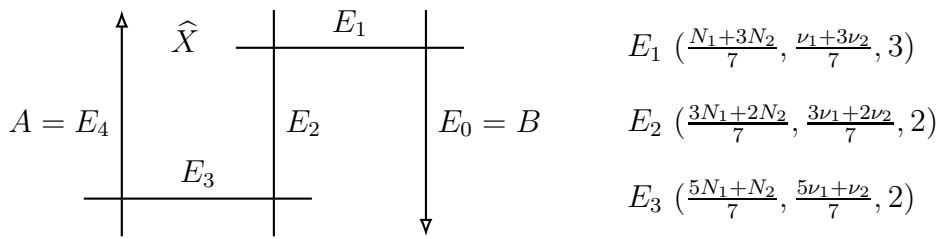
\begin{figure}[ht]
\begin{tikzpicture}[scale=1]
\draw[-{Latex[open]},thick] (0.5,0) -- node[left]{$A=E_4$} (0.5,3);
\draw[thick] (0,0.5) -- node[above]{$E_3$} (3,0.5);
\draw[thick] (2.5,0) -- node[right]{$E_2$} (2.5,3);
\draw[thick] (2,2.5) -- node[above]{$E_1$} (5,2.5);
\draw[-{Latex[open]},thick] (4.5,3) -- node[right]{$E_0=B$} (4.5,0);
\node[below right] at (1,3) {$\widehat{X}$};
\node[anchor=west] at (7,2.5) {$E_1 \ (\frac{N_1+3N_2}{7}, \frac{\nu_1+3\nu_2}{7}, 3)$};
\node[anchor=west] at (7,1.5) {$E_2 \ (\frac{3N_1+2N_2}{7}, \frac{3\nu_1+2\nu_2}{7}, 2)$};
\node[anchor=west] at (7,0.5) {$E_3 \ (\frac{5N_1+N_2}{7}, \frac{5\nu_1+\nu_2}{7}, 2)$};
\end{tikzpicture}
\caption{Resolution of $X=\frac{1}{7}(1,3)$ and numerical data of $f$ and $\omega$.}
\label{fig:resolution-713}
\end{figure}

By definition the matrix $\mathcal{D}_r$ in~\cite[Definition 5.5]{Veys97} is
$$
\mathcal{D}_r = \begin{vmatrix}
K_1 & -\LL^{<3>} & \LL^{<2>}-1 \\
-\LL^{<0>} & K_2 & -\LL^{<1>} \\
0 & -\LL^{<4>} & K_3
\end{vmatrix},
$$
where $K_1 = 1 + \LL^{<1>} + \LL^{2<1>}$, $K_2 = 1 + \LL^{<2>}$, $K_3 = 1 + \LL^{<3>}$,
and $<\!i\!>$ are given by
$$
\begin{aligned}
<\!1\!> &= \frac{N_1+3N_2}{7}s + \frac{\nu_1+3\nu_2}{7}, \\
<\!2\!> &= \frac{3N_1+2N_2}{7}s + \frac{3\nu_1+2\nu_2}{7}, \\
<\!3\!> &= \frac{5N_1+N_2}{7}s + \frac{5\nu_1+\nu_2}{7}, \\
\end{aligned} \qquad
\begin{aligned}
<\!0\!> &= N_2 s + \nu_2, \\
<\!4\!> &= N_1 s + \nu_1.
\end{aligned}
$$
A straightforward calculation shows that the determinant $\mathcal{D}_r$ and the expression
given in~\eqref{eq:sum-713} coincide.
\end{ex}

As in the classical case, one defines the \emph{topological zeta function} $Z_{\topo,W}(D_1,D_2;s)$ applying the Euler
characteristic specialization map to the motivic zeta function. Since $\chi (S_G(\Nbf,\nubf;s)) = |G|$, one easily obtains
a version of Theorem~\ref{thm:intro_main1} and Theorem~\ref{thm:intro_main2} for the topological zeta function
as follows.

\begin{cor}\label{cor:top1}
Under the assumptions of Theorem~\ref{thm:intro_main1} one has
\[
Z_{\topo,0}(D_1,D_2; s) = d \prod_{i=1}^{n} \frac{1}{N_i s+\nu_i} \in \QQ( s).
\]
\end{cor}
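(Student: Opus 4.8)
The plan is to obtain $Z_{\topo,0}(D_1,D_2;s)$ simply by pushing the closed formula for $\Zmot_{,0}(D_1,D_2;s)$ given by Theorem~\ref{thm:intro_main1} through the Euler characteristic specialization map $\chi$. First I would recall what this map does: $\chi$ extends the topological Euler characteristic $\KVarC\to\ZZ$, it factors through $\Mloc$ by $\LL\mapsto 1$, and, exactly as in the classical passage from motivic to topological zeta functions~\cite{DL98,DL99,DL01}, it extends to the subring of $\Mhatloc[\LL^{1/r}]\llb\LL^{-s/r}\rrb$ described in Theorem~\ref{thm:intro_main2} by the rules $\chi(\LL^{1/r})=1$, $\chi(\LL^{-s/r})=1$ (so that $s$ survives as a formal variable), and
\[
\chi\!\left(\frac{\LL-1}{1-\LL^{-(a s+b)}}\right)=\frac{1}{a s+b}.
\]
By Theorem~\ref{thm:intro_main1} the element $\Zmot_{,0}(D_1,D_2;s)$ is manifestly a product of (fractional) powers of $\LL$ and $\LL^{-s}$ with the generators $\frac{\LL-1}{1-\LL^{-(N_i s+\nu_i)}}$, so $\chi$ genuinely applies to it, and by definition $Z_{\topo,0}(D_1,D_2;s)=\chi(\Zmot_{,0}(D_1,D_2;s))$.

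Then I would evaluate $\chi$ factor by factor on the formula of Theorem~\ref{thm:intro_main1}. Each summand of $S_G(\Nbf,\nubf;s)=\sum_{\gamma\in G}\LL^{\varpi_\Nbf(\gamma) s+\varpi_\nubf(\gamma)}$ is a power of $\LL$ and $\LL^{-s}$, hence maps to $1$, so $\chi(S_G(\Nbf,\nubf;s))=|G|=d$ (this is the equality already recorded in the text preceding the statement). Likewise $\chi(\LL^{-n})=1$ and, for each $i$,
\[
\chi\!\left(\frac{(\LL-1)\LL^{-(N_i s+\nu_i)}}{1-\LL^{-(N_i s+\nu_i)}}\right)=\chi\!\left(\LL^{-(N_i s+\nu_i)}\right)\cdot\chi\!\left(\frac{\LL-1}{1-\LL^{-(N_i s+\nu_i)}}\right)=\frac{1}{N_i s+\nu_i}.
\]
Multiplying these contributions, since $\chi$ is a ring homomorphism, yields $d\prod_{i=1}^n\frac{1}{N_i s+\nu_i}$, which indeed lies in $\QQ(s)$ because $\nu_i\geq 1$ makes each $N_i s+\nu_i$ a nonzero polynomial; this is the asserted formula.

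The computation itself is routine, so the only point that requires real care is the well-definedness of the specialization $\chi$ on the relevant subring of $\Mhatloc[\LL^{1/r}]\llb\LL^{-s/r}\rrb$: one must know that $\LL\mapsto 1$ together with the prescribed images of $\LL^{1/r}$, $\LL^{-s/r}$ and the generators $\frac{\LL-1}{1-\LL^{-(a s+b)}}$ extends consistently to a ring homomorphism into $\QQ(s)$. This is precisely the standard construction underlying the topological zeta function (cf.~\cite{DL98,DL99,DL01}); once it is invoked, the corollary drops out of the explicit shape of the formula in Theorem~\ref{thm:intro_main1}, and the corresponding specialization of Theorem~\ref{thm:intro_main2} would be handled in exactly the same way, replacing $S_{G_k}(\Nbf_k,\nubf_k;s)$ by $|G_k|$ stratum by stratum.
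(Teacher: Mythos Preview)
Your proposal is correct and follows exactly the approach indicated in the paper: the authors simply remark that applying the Euler characteristic specialization sends $S_G(\Nbf,\nubf;s)$ to $|G|=d$, after which the formula of Theorem~\ref{thm:intro_main1} specializes term by term to the claimed expression. Your write-up just spells out the factor-by-factor computation and the domain of definition of $\chi$ in more detail than the paper does.
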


\begin{cor}\label{cor:top2}
Using the notation of Theorem~\ref{thm:intro_main2}, one has
\[
Z_{\topo,W}(D_1,D_2; s) = \sum_{k \geq 0} \chi \left(
Y_k \cap \pi^{-1}(W) \right) d_k \prod_{i = 1}^{n} \frac{1}{N_{i,k}  s + \nu_{i,k}}
\in \QQ( s),
\]
where $d_k$ denotes the order of the group $G_k$.
\end{cor}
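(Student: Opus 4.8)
The plan is to obtain the topological zeta function as a specialization of the motivic one, exactly as in the classical case (see \cite{DL98, DL99}). First I would recall that the Euler characteristic map $\chi\colon \KVarC \to \ZZ$, $[V] \mapsto \chi(V)$, sends $\LL = [\AA^1]$ to $1$, and that it extends to the relevant completed and localized ring so that $\LL^{1/r} \mapsto 1$ as well. The key point is how the rational-function building blocks of Theorem~\ref{thm:intro_main2} specialize. Under $\LL \to 1$, a factor $\frac{(\LL-1)\LL^{-(a s + b)}}{1 - \LL^{-(a s + b)}}$ must be handled by writing $\LL^{-s} = e^{-s\log\LL}$ formally (or, concretely, by the substitution $\LL = e^{-u}$ and Taylor expansion), which yields the limit $\frac{1}{a s + b}$; this is the standard computation that turns the motivic zeta function into the topological one and explains the appearance of the $\prod_i \frac{1}{N_{i,k}s + \nu_{i,k}}$ terms. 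Similarly, $S_{G_k}(\Nbf_k, \nubf_k; s) = \sum_{\gamma \in G_k} \LL^{\varpi_{\Nbf_k}(\gamma)\cdot s + \varpi_{\nubf_k}(\gamma)}$ specializes to $\sum_{\gamma \in G_k} 1 = |G_k| = d_k$, since each summand is a power $\LL^{(\text{linear in }s)}$ tending to $1$.

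Concretely, the steps are: (1) invoke Theorem~\ref{thm:intro_main2} to write
\[
\Zmot_{,W}(D_1,D_2;s) = \LL^{-n} \sum_{k \geq 0} [Y_k \cap \pi^{-1}(W)]\, S_{G_k}(\Nbf_k,\nubf_k;s) \prod_{i=1}^n \frac{(\LL-1)\LL^{-(N_{i,k}s+\nu_{i,k})}}{1-\LL^{-(N_{i,k}s+\nu_{i,k})}};
\]
(2) apply the Euler characteristic specialization termwise, using multiplicativity $\chi([A]\cdot[B]) = \chi(A)\chi(B)$ and the fact that the sum over $k$ is finite, so no convergence subtlety arises beyond that already guaranteed by Theorem~\ref{thm:intro_main2}; (3) compute $\chi([Y_k \cap \pi^{-1}(W)]) = \chi(Y_k \cap \pi^{-1}(W))$, $\chi(S_{G_k}(\Nbf_k,\nubf_k;s)) = d_k$, $\chi(\LL^{-n}) = 1$, and $\chi\left(\frac{(\LL-1)\LL^{-(as+b)}}{1-\LL^{-(as+b)}}\right) = \frac{1}{as+b}$; (4) multiply to obtain the stated formula in $\QQ(s)$. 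One should note that Corollary~\ref{cor:top1} is just the case $W = \{0\}$, $Y_0 = \{0\}$, $G_0 = G$ with a single stratum, so it need not be proven separately.

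The main (and only genuine) obstacle is justifying that the Euler characteristic specialization is well defined on the subring of $\Mhatloc[\LL^{1/r}]\llb \LL^{-s/r}\rrb$ generated by $\Mloc$, $\LL^{1/r}$, $\LL^{-s/r}$ and the elements $\left\{\frac{\LL-1}{1-\LL^{-(as+b)}}\right\}$, and that it sends $\frac{\LL-1}{1-\LL^{-(as+b)}}$ to $\frac{1}{as+b}$. This is exactly the content of the classical passage from motivic to topological zeta functions: one argues that $\frac{\LL-1}{1-\LL^{-(as+b)}}$ lies in the image of the natural map after inverting $\LL$ appropriately, or more directly one defines the specialization on the larger polynomial ring $\ZZ[\LL^{\pm 1/r}][\LL^{-s/r}]$ localized at the multiplicative set generated by $\{1 - \LL^{-(as+b)}\}$ and checks that $\LL \mapsto 1$ combined with L'Hôpital (equivalently, the substitution $\LL = 1+\varepsilon$ and taking the lowest-order term) is compatible with all the ring relations. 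Since Theorem~\ref{thm:intro_main2} already asserts that $\Zmot_{,W}(D_1,D_2;s)$ lies in precisely this subring, and since this specialization is standard and recalled in the sources cited in the paper, the argument is essentially bookkeeping once this compatibility is acknowledged; I would therefore keep this part brief, referring to the classical case, and spend the bulk of the write-up on the clean termwise computation in steps (1)--(4).
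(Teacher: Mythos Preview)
Your proposal is correct and follows exactly the approach the paper takes: the paper simply remarks that $\chi(S_G(\Nbf,\nubf;s)) = |G|$ and that the topological zeta function is obtained from Theorem~\ref{thm:intro_main2} by the Euler characteristic specialization, without spelling out the termwise computation or the standard passage $\frac{(\LL-1)\LL^{-(as+b)}}{1-\LL^{-(as+b)}} \mapsto \frac{1}{as+b}$. Your write-up is in fact more detailed than the paper's own treatment, which leaves both corollaries as immediate consequences of that single sentence.
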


\bigskip
\section{A family  of surface singularities}\label{sc:Yomdin_Qres}

Let $f:\CC^3\to \CC$ be the function defined by $f = z^{m+k} + h_m(x,y,z)$, where $h_m(x,y,z)$
is a homogeneous polynomial of degree $m$ in three variables and $k \geq 1$.
Assume that $C_0 = \{h_m = 0\} \subset \PP^2$ has only one singular point $P = (0:0:1)$,
analytically isomorphic to the cusp $x^q+y^p$ with $\gcd(p,q)=1$. Denote $k_1 = \gcd(k,p)$
and $k_2 = \gcd(k,q)$. Since $\Sing(C_0)\cap \{z=0\} = \emptyset$ in~$\PP^2$, the locus $D_1 = \{f=0\} \subset (\CC^3,0)$
defines a Yomdin surface singularity~\cite{Yomdin74}. In \cite[Example~3.7]{Martin11} the characteristic
polynomial of the monodromy associated with $D_1$ was computed via an embedded $\QQ$-resolution
$\pi: Y \to (\CC^3,0)$ of $D_1$, cf.~\cite{GLM97, Martin12}.

Let us briefly describe the resolution $\pi$ of \cite{Martin11}. Consider the classical blow-up at the origin
$\pi_1:\widehat{\CC}^3 \to \CC^3$. The third chart of $\pi_1$ is given by $(x,y,z) \mapsto (xz,yz,z)$ and the local equation
of the total transform is
$$
  z^m (z^k + x^q + y^p) = 0.
$$
The strict transform $\widehat{D}_1$ and the exceptional divisor $E_0$ intersect transversely
at every point but in $P\in C_0 \cong E_0\cap \widehat{D}_1$, see Figure~\ref{fig:E0}.
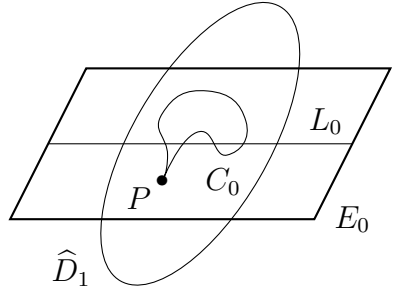
\begin{figure}[ht]
\centering
\scalebox{1}{
	\begin{tikzpicture}
		\draw[thick] (-2.5,-1) -- (-1.5,1) -- (2.5,1) -- (1.5,-1) -- cycle;
		\node at (2,-1) {$E_0$};
		
		\draw[] (-2,0) -- (2,0);
		\node at (1.65,0.3) {$L_0$};
		
		\draw[rotate=60] (0,0) ellipse (2.1cm and 0.9cm);
		\node at (-1.7,-1.65) {$\widehat{D}_1$};
		
		\node (P0) at (-0.5,-0.5) {\footnotesize $\bullet$}; \node (P1) at (-0.05,0.15) {};
		\node (P2) at (0.35,-0.15) {}; \node (P3) at (0.6,0.3) {};
		\node (P4) at (0.1,0.7) {}; \node (P5) at (-0.5,0.4) {};
		\node (P6) at (-0.43,-0.15) {};
		\draw  plot [smooth, tension=1] coordinates {(P0) (P1) (P2) (P3) (P4) (P5) (P6) (P0)};
		\node[left] at (P0.south) {$P$};
		
		\node at (0.3,-0.5) {$C_0$};
	\end{tikzpicture}
}
\caption{The divisor $E_0$ after the first blow-up.}
\label{fig:E0}
\end{figure}
Let $\pi_2$ be the weighted blow-up at $P$ with respect to the vector
$w = \left(\frac{k p}{k_1 k_2}, \frac{k q}{k_1 k_2}, \frac{p q}{k_1 k_2}\right)$.
The second chart of $\pi_2$ is given by $$(x,y,z) \longmapsto
\left(xy^{\frac{kp}{k_1k_2}},y^{\frac{kq}{k_1k_2}},y^{\frac{pq}{k_1k_2}}z\right)$$
and the local equation of the total transform is
\begin{equation}\label{eq:2nd-chart}
  \left\{ y^{\frac{p q}{k_1 k_2}(m+k)} z^m (z^k + x^q + 1) = 0\right\} \subset
  X\left(\frac{k q}{k_1 k_2}; \frac{k p}{k_1 k_2}, -1, \frac{p q}{k_1
k_2}\right),
\end{equation}
where $y=0$ represents the new exceptional divisor $E_1$.
The composition $\pi = \pi_1 \circ \pi_2$ is an embedded $\QQ$-resolution.
The total transform is given by
$$
\pi^* D_1 = \widehat{D}_1 + m E_0 + \frac{pq}{k_1k_2}(m+k) E_1
$$
and every cyclic quotient singularity is represented by a small group.
The final situation in $E_1$ is illustrated in Figure~\ref{fig:E1}.

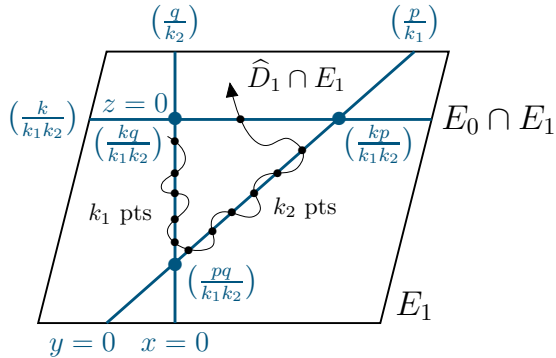
\begin{figure}[ht]
\centering
\scalebox{0.9}{
	\begin{tikzpicture}
		\usetikzlibrary{arrows}
		\definecolor{bluegreen2}{RGB}{0, 85, 127}
	       \colorlet{colsing}{bluegreen2}
	       \colorlet{colsingtext}{colsing}
		
		\draw[thick] (-3,-1.5) -- (-2,2.5) -- (3,2.5) -- (2,-1.5) -- cycle;
		\node at (2.5,-1.25) {\large $E_1$};
		
		\node (Lz0) at (-2.25,1.5) {};
		\node (Lz1) at (2.75,1.5) {};
		\draw[very thick, color=colsing] (Lz0.center) -- (Lz1.center);
		\node[right] at (Lz1) {\large $E_0\cap E_1$};
		\node[left, text=colsingtext] at (Lz0) { $\big(\frac{k}{k_1k_2}\big)$};
		\node[right, xshift=1, yshift=3, text=colsingtext] at (Lz0.north) { $z=0$};
		
		\node (Lx0) at (-1,2.5) {};
		\node (Lx1) at (-1,-1.5) {};
		\draw[very thick, color=colsing] (Lx0.center) -- (Lx1.center);
		\node[below, text=colsingtext] at (Lx1) {$x=0$};
		\node[above, text=colsingtext] at (Lx0) { $\big(\frac{q}{k_2}\big)$};
		
		\node (Ly0) at (2.5,2.5) {};
		\node (Ly1) at (-2,-1.5) {};
		\draw[very thick, color=colsing] (Ly0.center) -- (Ly1.center);
		\node[below, xshift=-10, text=colsingtext] at (Ly1) {$y=0$};
		\node[above, text=colsingtext] at (Ly0) { $\big(\frac{p}{k_1}\big)$};
		
		\node[text=colsing] (Pxz) at (-1,-0.65) {\large $\bullet$};
		\node[text=colsing,right] at (Pxz.south) { $\big(\frac{pq}{k_1k_2}\big)$};
		
		\node[text=colsing] (Pyz) at (-1,1.5) {\large $\bullet$};
		\node[text=colsing, left, yshift=-3] at (Pyz.south) { $\big(\frac{kq}{k_1k_2}\big)$};
		
		\node[text=colsing] (Pxy) at (1.4,1.5) {\large $\bullet$};
		\node[text=colsing,right, xshift=-3, yshift=-3] at (Pxy.south) { $\big(\frac{kp}{k_1k_2}\big)$};
		
		\node at (-1.8,0.1) {\footnotesize $k_1$ pts};
		\node at (0.9,0.2) {\footnotesize $k_2$ pts};
		
		\node (P0) at (-1.11,1.25) {}; \node (P1) at (-0.85,0.9) {};
		\node (P2) at (-1.2,0.5) {}; \node (P3) at (-0.8,0.3) {};
		\node (P4) at (-1.1,-0.2) {}; \node (P5) at (-0.5,-0.4) {};
		\node (P6) at (-0.4,0.1) {}; \node (P7) at (0.2,0.1) {};
		\node (P8) at (0.2,0.6) {}; \node (P9) at (0.8,0.8) {};
		\node (P10) at (0.7,1.2) {}; \node (P11) at (0.15,1.2) {};
		\node (Pfinal) at (-0.2,2.05) {};
		\draw[-triangle 45]  plot [smooth, tension=1] coordinates {(P0) (P1) (P2) (P3) (P4) (P5) (P6) (P7) (P8) (P9) (P10) (P11) (Pfinal)};
		\node[right, xshift=3, yshift=3] at (Pfinal) {$\widehat{D}_1\cap E_1$};
		
		\node at (-1,1.17) {\tiny $\bullet$};
		\node at (-1,0.7) {\tiny $\bullet$};
		\node at (-1,0.4) {\tiny $\bullet$};
		\node at (-1,0.03) {\tiny $\bullet$};
		\node at (-1,-0.32) {\tiny $\bullet$};
		
		\node at (-0.8,-0.43) {\tiny $\bullet$};
		\node at (-0.45,-0.15) {\tiny $\bullet$};
		\node at (-0.17,0.12) {\tiny $\bullet$};
		\node at (0.16,0.4) {\tiny $\bullet$};
		\node at (0.5,0.7) {\tiny $\bullet$};
		\node at (0.86,1.04) {\tiny $\bullet$};
		\node at (-0.04,1.5) {\tiny $\bullet$};
		
	\end{tikzpicture}
}
\caption{Intersection of $E_1$ with the other components.}
\label{fig:E1}
\end{figure}

The generalized A'Campo's formula~\cite{Martin11} provides the characteristic polynomial
of the monodromy associated with $D_1 \subset (\CC^3,0)$:
\begin{equation}\label{eq:monodromy}
\Delta(t) = \frac{\big(t^m-1\big)^{\chi(\PP^2\setminus C_0)}}{t-1} \cdot
\frac{\big(t^{m+k}-1\big)\big(t^{\frac{p q}{k_1 k_2}(m+k)}-1\big)^{k_1 k_2}}
{\big(t^{\frac{p}{k_1}(m+k)}-1\big)^{k_1}
\big(t^{\frac{q}{k_2}(m+k)}-1\big)^{k_2}},
\end{equation}
where $\chi(\PP^2 \setminus C_0) = m^2-3m+3-(p-1)(q-1)$. The Milnor number of $D_1$ is the degree
of $\Delta(t)$, that is, $\mu_{D_1} = (m-1)^3 + k (p-1)(q-1)$.

Let us compute the local motivic zeta function at the origin $\Zmot_{,0}(D_1,D_2; s)$
associated with $D_1$ above and $D_2 = (a-1)L$ with $a \geq 1$, where $L$ is the divisor defined
by a generic linear homogeneous polynomial in $\CC^3$. One checks that
$$
\pi^* D_2 + K_\pi = (a-1) \widehat{L} + (a+2-1) E_0 +  (\nu_1-1) E_1,
$$
where $\nu_1 = \frac{kp}{k_1k_2} + \frac{kq}{k_1k_2} + \frac{pq}{k_1k_2}(a+2)$.
Note that the morphism $\pi: Y \to (\CC^3,0)$ also defines a $\QQ$-resolution of $D_1+D_2$.

Let us describe the stratification $\bigsqcup_{\ell \geq 0} Y_{\ell}$ of $Y \cap \pi^{-1}(0) = E_0 \cup E_1$, as given by Theorem~\ref{thm:intro_main2}. The divisor $E_0$, created as a projective plane, does not contain any singular
point of $Y$. The curve $C_0 = \widehat{D}_1 \cap E_0$ of degree $m$ and the line $L_0 = \widehat{L} \cap E_0$ have to be
considered for the stratification of $E_0 \setminus \{ P \} = Y_0 \sqcup Y_1 \sqcup Y_2 \sqcup Y_3$.
In the following table we calculate the numerical data associated with each stratum.
$$
\begin{array}{l|c|c|c|c}
\multicolumn{1}{c|}{\text{Stratum}} & \text{Class} & \Nbf_{\ell} & \nubf_{\ell} & G_{\ell} \\
\hline
Y_0 = E_0 \setminus ( C_0 \cup L_0 ) & \LL^2 - [C_0] + m & (0,0,m) & (1,1,a+2) & (1;0,0,0) \\
Y_1 = C_0 \setminus ( L_0 \cup \{ P \} ) & [C_0] - m - 1 & (1,0,m) & (1,1,a+2) & (1;0,0,0) \\
Y_2 = L_0 \setminus C_0 & \LL + 1 - m & (0,0,m) & (1,a,a+2) & (1;0,0,0) \\
Y_3 = C_0 \cap L_0 & m & (1,0,m) & (1,a,a+2) & (1;0,0,0)
\end{array}
$$
Hence the contribution of $E_0 \setminus \{ P \}$ to the motivic zeta function $\Zmot_{,0}(D_1,D_2; s)$ is
\begin{equation}\label{eq:ZmotE0}
	\begin{aligned}
	\LL^{-3} & \frac{(\LL-1) \LL^{-(m s+a+2)}}{1-\LL^{-(m s+a+2)}} \Bigg(   \LL^2 - [C_0] + m  + ( [C_0] - m - 1 ) \, \frac{(\LL-1) \LL^{-( s+1)}}{1-\LL^{-( s+1)}}\\
	& + ( \LL + 1 - m ) \, \frac{(\LL-1) \LL^{-a}}{1-\LL^{-a}}  + m \, \frac{(\LL-1) \LL^{-( s+1)}}{1-\LL^{-( s+1)}}
	\cdot \frac{(\LL-1) \LL^{-a}}{1-\LL^{-a}}\Bigg).
	\end{aligned}
\end{equation}

The divisor $E_1$ is isomorphic to the weighted projective plane $\PP^2_w$ and it does contain
singular points of $Y$. Let us denote $C_1 = \widehat{D}_1 \cap E_1$ and by $L_x$, $L_y$, $L_z$ the
three coordinate axes of $E_1 = \PP^2_w$. Consider the following stratification of $E_1 = Y_{4} \sqcup
\dots \sqcup Y_{14}$, where $Y_4 = E_1 \setminus ( C_1 \cup L_x \cup L_y \cup L_z )$ and
$$
\begin{array}{l}
Y_5 = C_1 \setminus ( L_x \cup L_y \cup L_z ), \\
Y_6 = L_x \setminus ( C_1 \cup L_y \cup L_z ), \\
Y_7 = L_y \setminus ( C_1 \cup L_x \cup L_z ), \\
Y_8 = L_z \setminus ( C_1 \cup L_x \cup L_y ),
\end{array} \qquad
\begin{array}{l}
\ Y_9 = C_1 \cap L_x, \\
Y_{10} = C_1 \cap L_y, \\
Y_{11} = C_1 \cap L_z,
\end{array} \qquad
\begin{array}{l}
Y_{12} = L_x \cap L_y, \\
Y_{13} = L_x \cap L_z, \\
Y_{14} = L_y \cap L_z.
\end{array}
$$
In the table below the calculation for the strata $Y_7$, $Y_{10}$, $Y_{14}$ (resp.~$Y_{12}$)
was performed using the 1st chart (resp.~3rd chart) of $\pi_2$ where $x=0$ (resp.~$z=0$) is
the equation of the exceptional divisor. For the rest of strata the 2nd chart was used,
see~\eqref{eq:2nd-chart}. In order to simplify the table, denote $m_1 = \frac{pq}{k_1k_2}(m+k)$
and recall that $\nu_1 = \frac{kp}{k_1k_2} + \frac{kq}{k_1k_2} + \frac{pq}{k_1k_2}(a+2)$.
$$
\begin{array}{c|c|c|c|c}
\text{Stratum} & \text{Class} & \Nbf_{\ell} & \nubf_{\ell} & G_{\ell} \\
\hline
Y_4 & \begin{array}{c} \LL^2 - 2 \LL - [C_1] \\ + k_1 + k_2 + 2\\[0.1em] \end{array}
& (0,m_1,0) & (1,\nu_1,1) & (1;0,0,0) \\
Y_5 & [C_1] - k_1 - k_2 - 1 & (1,m_1,0) & (1,\nu_1,1) & (1; 0, 0, 0) \\
Y_6 & \LL - k_1 - 1 & (0,m_1,0) & (1,\nu_1,1) & (\frac{q}{k_2}; \frac{kp}{k_1k_2}, -1, 0) \\
Y_7 & \LL - k_2 - 1 & (m_1,0,0) & (\nu_1,1,1) & (\frac{p}{k_1}; -1, \frac{kq}{k_1k_2}, 0) \\
Y_8 & \LL - 2 & (0,m_1,m) & (1,\nu_1,a+2) & (\frac{k}{k_1k_2}; 0, -1, \frac{pq}{k_1k_2}) \\
\hline
Y_9 & k_1 & (1,m_1,0) & (1,\nu_1,1) & (\frac{q}{k_2}; \frac{kp}{k_1k_2}, -1, 0) \\
Y_{10} & k_2 & (m_1,1,0) & (\nu_1,1,1) & (\frac{p}{k_1}; -1, \frac{kq}{k_1k_2}, 0) \\
Y_{11} & 1 & (1,m_1,m) & (1,\nu_1,a+2) & (\frac{k}{k_1k_2}; 0, -1, \frac{pq}{k_1k_2}) \\
Y_{12} & 1 & (0,0,m_1) & (1,1,\nu_1) & (\frac{pq}{k_1k_2}; \frac{kp}{k_1k_2}, \frac{kq}{k_1k_2}, -1) \\
Y_{13} & 1 & (0,m_1,m) & (1,\nu_1,a+2) & (\frac{kq}{k_1k_2}; \frac{kp}{k_1k_2}, -1, \frac{pq}{k_1k_2}) \\
Y_{14} & 1 & (m_1,0,m) & (\nu_1,1,a+2) & (\frac{kp}{k_1k_2}; -1, \frac{kq}{k_1k_2}, \frac{pq}{k_1k_2}) \\
\end{array}
$$
The contribution of $E_1$ to the motivic zeta function $\Zmot_{,0}(D_1,D_2; s)$ can be written as
\begin{equation}\label{eq:ZmotE1}
\LL^{-3} \frac{(\LL-1) \LL^{-(m_1 s+\nu_1)}}{1-\LL^{-(m_1 s+\nu_1)}}\cdot Z(s),
\end{equation}
where $Z(s)$ is given by
\begin{equation*}
\begin{aligned}
	Z(s) = & \ \LL^2 - 2 \LL - [C_1] + k_1 + k_2 + 2 + ( \LL - k_1 - 1 ) S_{G_6} ( \Nbf_6, \nubf_6;s) \\[0.25cm]
	& + ( \LL - k_2 - 1 ) S_{G_7} ( \Nbf_7, \nubf_7;s) + S_{G_{12}} ( \Nbf_{12}, \nubf_{12};s) \\
	& + \Big( [C_1] - k_1 - k_2 - 1 + k_1 S_{G_9} ( \Nbf_9, \nubf_9;s) + k_2 S_{G_{10}} ( \Nbf_{10}, \nubf_{10};s) \Big)
	\, \frac{(\LL-1) \LL^{-( s+1)}}{1-\LL^{-( s+1)}} \\
	& + \Big( (\LL-2) S_{G_8}(\Nbf_8,\nubf_8;s) + S_{G_{13}}(\Nbf_{13},\nubf_{13};s) + S_{G_{14}}(\Nbf_{14},\nubf_{14};s) \Big)
	\, \frac{(\LL-1) \LL^{-(m s+a+2)}}{1-\LL^{-(m s+a+2)}} \\
	& + S_{G_{11}} ( \Nbf_{11}, \nubf_{11};s) \, \frac{(\LL-1) \LL^{-( s+1)}}{1-\LL^{-( s+1)}}
	\cdot \frac{(\LL-1) \LL^{-(m s+a+2)}}{1-\LL^{-(m s+a+2)}}.
	\end{aligned}
\end{equation*}

The sum of \eqref{eq:ZmotE0} and \eqref{eq:ZmotE1} gives rise to the wanted zeta function $\Zmot_{,0}(D_1,D_2; s)$.
To illustrate how the terms $S_{G}(\Nbf,\nubf;s)$ are calculated in practice, note that
if $\Nbf = (N_1,N_2,N_3)$, $\nubf = (\nu_1,\nu_2,\nu_3)$, and $G = (d;a,b,c)$, then
this term is simply
\begin{equation}\label{eqn:SG_dim3}
	S_{G}(\Nbf,\nubf;s) = \sum_{i=0}^{d-1} \LL^{\frac{1}{d} \left( (N_1 \overline{ia} + N_2 \overline{ib}
+ N_3 \overline{ic})  s + (\nu_1 \overline{ia} + \nu_2 \overline{ib} + \nu_3 \overline{ic})\right)},
\end{equation}
where $\overline{u}$ denotes the class of $u$ modulo $d$ satisfying $0 \leq \overline{u} \leq d-1$,
see also~\eqref{eq:sum-dim2}.

Finally we compute the topological zeta function associated with $\Zmot_{,0}(D_1,D_2; s)$ above.
Recall that when $\LL \to 1$, the sum $S_G(\Nbf,\nubf;s) \to |G|$. Then equation~\eqref{eq:ZmotE0}
becomes
$$
\left( ( 1 - \chi(C_0) + m ) + ( \chi(C_0) - m - 1 ) \, \frac{1}{s+1} + ( 2 - m ) \, \frac{1}{a}
+ m \, \frac{1}{s+1} \, \frac{1}{a} \right) \, \frac{1}{ms+a+2},
$$
where $\chi(C_0) = -m^2+3m+(p-1)(q-1)$. The curve $C_1 = \widehat{D}_1 \cap E_1$ is given by $z^k + x^q + y^p$
in~$\PP^2_w$. This is isomorphic to $z^{k_1k_2} + x^{k_2} + y^{k_1}$ in $\PP^2_{(k_1,k_2,1)}$,
see~\cite{Dolgachev82} and~\cite[Proposition~2.1]{AMO14}. According to~\cite[Theorem~5.7]{CMO14},
the Euler characteristic of this curve is $\chi(C_1) = k_1 + k_2 + 1 - k_1 k_2$.
Then equation~\eqref{eq:ZmotE1} gives the following term in the
topological zeta function
\begin{equation*}
\begin{aligned}
& \left[ k_1 k_2  - k_1 \frac{q}{k_2} - k_2 \frac{p}{k_1} + \frac{pq}{k_1k_2}
+ \left( - k_1 k_2 + k_1 \frac{q}{k_2} + k_2 \frac{p}{k_1} \right) \frac{1}{s+1} \right. \\
& \quad + \left. \left( - \frac{k}{k_1k_2} + \frac{kq}{k_1k_2} + \frac{kp}{k_1k_2} \right) \frac{1}{ms+a+2}
+ \frac{k}{k_1k_2} \, \frac{1}{s+1} \, \frac{1}{ms+a+2} \right] \frac{1}{m_1s+\nu_1}.
\end{aligned}
\end{equation*}
Note that for all $a \geq 1$, the candidate poles $-\frac{a+2}{m}$ and $-\frac{\nu_1}{m_1}$ induce eigenvalues
of the characteristic polynomial of the monodromy given in~\eqref{eq:monodromy}. In this case, $D_2 = (a-1)L$
always provides an \emph{allowed} differential $3$-form in the sense of~\cite{NemethiVeys12},
see also the recent work~\cite{AG19}.

\bigskip
\section{Quotient singularities with nonabelian groups}\label{sc:nonabelian_groups}

In this section we will illustrate the techniques developed in Section~\ref{sc:abelian_quotient_sing} by presenting an example
of a family of tetrahedral singularities and computing some invariants of motivic nature. We would like to emphasize that the calculation of the motivic invariants of this section, as well as those of Section~\ref{sc:Yomdin_Qres}, would have been infeasible by using classical resolution of singularities.

The underlying group of a tetrahedral singularity is a type (C) group in the classification
of~\cite[Chapter XII]{MBD:book}. We will use the following notation.
Let $d,q$ be two integers such that $0\leq q<d$ and $\gcd(d,q)=1$
and let us denote by $\xi$ a fixed primitive $d$th-root of unity.
Consider the quotient surface singularity $X=\CC^3/G_{d,q}$,
where
$$
G_{d,q}=\left\langle A,B\right\rangle\subset\GL_3(\CC)
$$
with
\[
	A=\begin{pmatrix}
		1 & &\\ & \xi &\\ & & \xi^q
	\end{pmatrix}
	\quad\text{and}\quad
	B=\begin{pmatrix}
		0 & 1 & 0\\ 0 & 0 & 1\\ 1 & 0 & 0
	\end{pmatrix}.
\]
Note that $A^d=B^3=\id$.

The variety $X$ is always $\QQ$-Gorenstein and log terminal as any quotient singularity under the action
of a linear group of finite order, see e.g.~\cite[Section~3.17]{Kollar13}. Using Reid-Tai's
criterion~\cite{Reid80} one checks that $X$ is never terminal and it is Gorenstein and canonical if and only if
$G_{d,q} \subset\text{SL}_3(\CC)$, that is, $q=d-1$. In the latter case they are called \emph{trihedral}
singularities and were extensively studied by Ito in order to construct a crepant resolution of $X$ and
prove the McKay correspondence~\cite{Ito94,Ito95_1,Ito95_2,Ito01}.

\subsection{A description of \texorpdfstring{$G_{d,q}$}{Gdq}}\mbox{}

Note that any element in $G$ is of the form $B^kM$, where $k=0,1,2$ and $M \in G$ is a diagonal matrix,
since the set of diagonal matrices is invariant under conjugation by $B$. Also, the diagonal matrices of
$G$ are obtained from $A$ by multiplying and permuting its diagonal elements. Thus, $G$ can be described
as 
\begin{equation}\label{eq:G-decomp}
G=\set{D_{i,j,k}}_{i,j,k=0}^{d-1}\sqcup\set{BD_{i,j,k}}_{i,j,k=0}^{d-1}\sqcup\set{B^2D_{i,j,k}}_{i,j,k=0}^{d-1},
\end{equation}
where $D_{i,j,k} = \diag(\xi^{jq+k}, \xi^{i+kq}, \xi^{iq+j}) = A^i(B^{-1}A^jB)(B A^k B^{-1})$. To compute the order
of $G$, one needs to study whether $D_{i,j,k} = \id$. This is equivalent to solving the following linear system of
congruences modulo $d$:
\begin{equation}\label{eq:sys-cong}
\left\{
\begin{aligned}
jq + k & \equiv 0, \\
i + kq & \equiv 0, \\
iq + j & \equiv 0.
\end{aligned}
\right.
\end{equation}
Since the determinant associated with the previous system is $q^3+1$, there are $d'=\gcd(d,q^3+1)$ triples $(i,j,k)$
with $D_{i,j,k} = \id$. In fact, the solutions of~\eqref{eq:sys-cong} can be described as $i \equiv q^2 \ell d/d' \!\mod d$,
$j \equiv \ell d/d'\!\mod d$, and $k \equiv - q \ell d/d' \!\mod d$, where $\ell = 0,1,\ldots,d'-1$. As a consequence $|G|=3d^3/d'$.

Let us now study when this group is small. Let $H_{d,q}$ be the normal subgroup of $G_{d,q}$ generated by
the elements having eigenvalues $\lambda=1$ with multiplicity $2$. Using the previous decomposition of $G$
given in \eqref{eq:G-decomp} and the description of the solutions of the system \eqref{eq:sys-cong} provided above,
one checks that $H_{d,q} = \{ \diag(\xi^{(q^3+1)i},\xi^{(q^3+1)j},\xi^{(q^3+1)k}) \}_{i,j,k=0}^{d'-1}$.
By construction, the quotient $G_{d,q}/H_{d,q}$ is small and it is in fact isomorphic to $G_{d',q}$ under
$A \mapsto \diag(1,\xi^{d/d'},\xi^{(d/d')q})$ and $B \mapsto B$. In particular, $G_{d,q}$ is small if and only if
$d \mid q^3+1$.

Hereafter we will assume the condition $d \mid q^3+1$ and denote the group simply by $G$ dropping the subindices.
In this case $|G| = 3d^2$ and we have the partition
\begin{equation}\label{eqn:struc_Gdq}
G=\set{D_{i,j}}_{i,j=0}^{d-1}\sqcup\set{BD_{i,j}}_{i,j=0}^{d-1}\sqcup\set{B^2D_{i,j}}_{i,j=0}^{d-1},
\end{equation}
where $D_{i,j}$ denotes $D_{i,j,0}$.

%

\subsection{The singular locus of the quotient space \texorpdfstring{$X$}{X}}\mbox{}

The arithmetical constants $\alpha=\gcd(d,q+1)$, $\beta=\gcd(d,q^2-q+1)$, and $\gamma = \frac{\alpha \beta}{d}$
will play an important role in the following. One can also express\footnote{This is a consequence of the fact
that $q^2-q+1 \equiv 3 \mod (q+1)$ and $q^3+1\equiv(q+1)^3\mod 3$.}
$\gamma$ as
\[
\gamma = \gcd\left(d, q+1, q^2-q+1, \frac{q^3+1}{d}\right)
=\gcd\left(d, q+1, 3, \frac{q^3+1}{d}\right)
= \gcd\left(3, d, \frac{q^3+1}{d}\right).
\]
In particular, $\gamma$ can be either $1$ or $3$.
To understand the singular locus of $X$, we study whether the isotropy group of any $P \in \CC^3$ is non-trivial.
The (projection of the) origin is singular because it has non-trivial stabilizer, namely the whole group~$G$.
Consider the lines in $\CC^3$ defined by $L:y=z=0$ and
$L_k: x-y = \xi^{k \frac{d}{\alpha}}x-z = 0$, for $k \geq 0$.
The matrix $D_{i,0}$ fixes the points of the form $(a,0,0)$ for $i=0,\ldots,d-1$.
Also, $BD_{0,j}$ fixes the points $(a,a,\xi^{k\frac{d}{\alpha}}a)$ for $j\equiv-k\frac{d}{\alpha} \mod d$.
Then the projection of $L$ and $L_k$ (again denoted by $L$ and $L_k$) are singular in $X$.
It is left to the reader to check that the lines $L_k$ and $L_{k'}$ are the same in $X$
if and only if $k \equiv k' \mod \gamma$.

It remains to show that there are no more singular points other than the ones contained in $L$ and $L_k$.
If $D_{i,j} P = P$, then $[P] \in X$ is equivalent to an element in $L$. If $B^2 D_{i,j} P = P$,
then $P = D_{i,j}^{-1} B P = B D_{k,l} P$ for some $k,l$ and hence only the case $B D_{i,j} P = P$ has to be studied.
Note that $BD_{i,j}$ has $\lambda = 1$ as an eigenvalue only if $i+j \equiv 0 \mod {d/\alpha}$ and in such a case
$\ker(BD_{i,j}-\id) = \langle ( \xi^{iq+i+j},\xi^{iq+j},1 ) \rangle$. The equation
$A^{i}( \xi^{iq+i+j},\xi^{iq+j},1 )^t = \xi^{iq+i+j} (1,1,\xi^{-(i+j)})^t$ together
with the fact that $i+j$ is a multiple of $d/\alpha$ implies that $[P] \in X$ has to be
in one of the $L_k$'s.

Summarizing, we obtain that $\Sing X$, pictured in Figure~\ref{fig:blowup}, is composed by lines passing through the origin,
which induce the following stratification given by the isotropy groups:
\[
\Sing X = \set{O} \sqcup \big( L \setminus \{ O \} \big)
\sqcup \bigsqcup_{k=0}^{\gamma-1} \big( L_k \setminus \{ O \} \big).
\]
If $\gamma=1$, then $L_0 = L_1 = L_2 = \{ x=y=z \}$.

\subsection{Blowing-up at the origin and the new singular locus}\label{sec:blowing-up-Gdq}\mbox{}

Let $D_1$ and $D_2$ be two $\QQ$-Cartier divisors in $X$, defined by $D_1 : (xyz)^{N}=0$ and $D_2:(xyz)^{\nu-1}=0$
for $N\geq0$ and $\nu>0$. One of the goals of this example is to compute $\Zmot_{,0}(D_1,D_2; s)$.
Since the origin has nonabelian isotropy group, our formula in Theorem~\ref{thm:intro_main2} can not be used directly.
We will show that the blow-up at the origin gives rise to an embedded $\QQ$-resolution of $D_1+D_2 \subset X$.
This idea is related to Batyrev's canonical abelianization~\cite{Batyrev99, Batyrev00}, see also \cite{Pouyanne92}.

Let $\pi_1: \widehat{\CC}^3 \to \CC^3$ be the blowing-up at the origin. The group $G$ acts naturally on
$\widehat{\CC}^3$ and $\pi_1$ induces a morphism on the quotients $\pi: Y = \widehat{\CC}^3/G \to X$
which is an isomorphism outside $E = \pi^{-1}(0) \simeq \PP^2/G$.
Note that although $G$ is small on $\CC^3$, this is no longer the case on $\widehat{\CC}^3$ if $\beta \neq 1$,
since $D_{i,j} = \xi^{kq\frac{d}{\beta}} \id$ for $i \equiv qj$, $j \equiv k \frac{d}{\beta} \mod d$, $k=0,\ldots,\beta-1$
and then it fixes all points of $E$.

Let us denote by $[(u:v:w)]$ the class of $(u:v:w)$ in  $\PP^2 /G$. Consider the points
$P = [(1:0:0)]$ and $R_k = [(1:\varepsilon^k:\varepsilon^{2k})]$, $k=0,1,2$,
where $\varepsilon$ is a primitive $3$th root of unity.
Note that $R_0$, $R_1$, $R_2$ are all the same if and only if $3 \mid d$, since in this case
$\varepsilon = \xi^{d/3}$ is a primitive $3$th root of unity and $A^{d/3}=\diag(1, \varepsilon, \varepsilon^2)$
makes $(1:\varepsilon^k:\varepsilon^{2k})$ belong to the same orbit\footnote{The assumption $d \mid q^3+1$ with
$3 \mid d$ implies $q\equiv 2 \! \mod 3$ and hence $q+1 \equiv q^2-q+1 \equiv 0 \! \mod 3$.}.
In such a case, i.e.,~$3 \mid d$, the points $P_n = [(1:1:\xi^{n\frac{q^2-q+1}{3}})]$ will be relevant.
Finally consider also $D = \{ uvw= 0 \} \subset E$, it corresponds to the intersection of the
strict transform of $D_1+D_2$ with $E$.

If a point in $Y$ is singular due to the diagonal matrices, then it has to be in $D$.
If a point is fixed by a matrix $B^2 D_{k,l}$, then it is also fixed  by some matrix of type $BD_{i,j}$.
For the latter we study the corresponding eigenvalues and eigenvectors.
To do so let us fix $\lambda_{i,j}\in\CC^*$ such that $\lambda_{i,j}^3 = \xi^{(i+j)(q+1)}$,
then the eigenvalues of the matrix $BD_{i,j}$ are $\{ \lambda_{i,j} \varepsilon^k \}_{k=0}^2$
and the eigenspaces are of the form
$\ker(BD_{i,j}-\lambda_{i,j}\varepsilon^k\id) = \langle ( 1, \lambda_{i,j}\varepsilon^k,
\lambda_{i,j}^{-1}\xi^{(i+j)q}\varepsilon^{2k}) \rangle$. We just need to study whether
$P_{n,k}=\left( 1: \lambda_{n}\varepsilon^k: \lambda_{n}^{-1}\xi^{nq}\varepsilon^{2k}\right)$
with $\lambda_{n}^3=\xi^{n(q+1)}$ are equal to one another in~$E$.

Two cases arise. If $3\nmid d$, then $\xi^n=\xi^{3m}$ for some $m\in\ZZ$, since $3$ is
invertible modulo~$d$. Thus, the set of points above is described by
$\left\{ \left( 1: \xi^{m(q+1)} \varepsilon^k: \xi^{m(2q-1)}\varepsilon^{2k} \right) \right\}_{m,k\in\ZZ}$.
Applying the action of $D_{i,j}$ with $i\equiv jq-m(q+1)\mod d$ and $j\equiv m\mod d/\beta$,
it turns out that $[P_{n,k}] = R_k$.
If $3 \mid d$, then one can choose $\lambda_{n} = \xi^{n\frac{q+1}{3}}$ and $\varepsilon = \xi^{d/3}$,
and therefore $[A^{-n\frac{q+1}{3}-k\frac{d}{3}} P_{n,k}] = P_{-n}$.
It is left to the reader to check that $[P_{n}]=[P_{n'}]\in E$ if and only if $n\equiv n'\mod 3$.

Summarizing, we have just shown that (we will see below that the equality holds)
\begin{enumerate}
\item if $3\nmid d$, then $E \cap \Sing Y \subseteq \{ R_0, R_1, R_2 \} \sqcup \big( D \setminus \{ P \} \big) \sqcup \{ P \}$;
\item if $3\mid d$, then $E \cap \Sing Y \subseteq \{ P_0 = R_0, P_1, P_2 \} \sqcup \big( D \setminus \{ P \} \big) \sqcup \{ P \}$.
\end{enumerate}
It is worth noticing that, when $\gamma=3$, the point $P_k$ is precisely
$[(1:1:\xi^{k\frac{d}{\alpha}})]$, which corresponds to the direction given by
the singular line $L_k$ in $X$. The previous description is pictured in Figure~\ref{fig:blowup}.

\begin{figure}[ht]
	\centering
	\scalebox{0.65}{
		\begin{tikzpicture}
		\begin{scope}[xshift=-7cm]
		\begin{scope}[cm={1,1,-1,0,(0,0)}, x=-0.6cm,y=-1cm,z=1cm]
		    \node at (0,4.5,5) {\Large $\displaystyle X=\CC^3/G$};
		
		    \draw [->] (0,0,0) -- (0,0,4) node [left] {$z$};
		
		    \draw [very thick] (0,0,-2) -- (0,0,4.5);
		    \def\k{0.1}
		    \draw [very thick] (-2,-2,-2-2*\k) -- (5,5,5+5*\k);
		    \def\c1{2}
		    \draw [dashed, very thick] (-2*\c1,-2*\c1,-3/2*\c1) -- (4*\c1,4*\c1,3*\c1);
                    \def\c2{2.3}
		    \draw [dashed, very thick] (-2*\c2,-2*\c2,-2.5/2*\c2) -- (4*\c2,4*\c2,2.5*\c2);
		
		    \node[right] at (0,0,4.5) {\large $L$};
		    \node[left] at (5,5.2,5.8) {\large $L_0$};
		    \node at (8,8,6.3) {\large $L_1$};
		    \node at (8,8.6,5.3) {\large $L_2$};
		
		    \node at (0,0,0) {\Large $\bullet$};
		    \node at (0,-0.3,0.3) {\large $O$};
		\end{scope}
		\end{scope}
		
		\begin{scope}[xshift=7cm, yshift=0cm]
		   \definecolor{bluegreen2}{RGB}{0, 85, 127}
		    \colorlet{colsing}{bluegreen2}
		    \colorlet{colsingtext}{colsing}
		
		    \draw[<-, thick] (-8,0) -- (-6,0) node[midway,above] {\Large $\pi$};;
		
		      \node at (6.2,5) {\Large $\displaystyle Y=\widehat{\CC}^3/G$};
		
			\draw[ultra thick] (-6,-1.5) -- (-3,2) -- (5,2) -- (2.5,-1.5) -- cycle;
			
			
			\draw[dashed, ultra thick, color=colsing] (1,2) -- (3.5,0);
			\node[right, xshift=5, text=colsingtext] at (3.5,0) {\Large $D: uvw=0$};
			\node[right, xshift=0, text=colsingtext] at (3.5,-0.9) {\normalsize $\displaystyle\Big(\frac{d}{\beta}\Big)$};
			
			\node[color=colsing] (P) at (1.5,1.55) {\Huge $\bullet$};
			\node[xshift=7, yshift=2, text=colsingtext] at (P.east) {\Large $P$};
			\node[xshift=-2, yshift=-15, text=colsingtext] at (P.south) {\normalsize $\displaystyle\Big(\frac{d^2}{\beta}\Big)$}; 
			
			\draw[very thick] (P.center) -- (1.5,4.83);  \node at (1.7,5.4) {\Large $\widehat{L}$};
			\draw[very thick] (1.5,-1.5) -- (1.5,-2);
			
			\node[color=colsing] (P0) at (-3.5,0.5) {\Huge $\bullet$};
			\node[xshift=15, text=colsingtext] at (P0.north) {\Large $P_0$};
			\node[yshift=-5, text=colsingtext] at (P0.south) {\normalsize $(3)$};
			
			\draw[very thick] (P0.center) -- (-3.5,3.75); \node at (-3.6,4.3) {\Large $\widehat{L}_0$};
			\draw[very thick] (-3.5,-1.5) -- (-3.5,-3.2);
			
			\node[text=colsingtext] (P1) at (-2,0.8) {\Large $\circ$};
			\node[xshift=15, text=colsingtext] at (P1.north) {\Large $P_1$};
			\node[yshift=-5, text=colsingtext] at (P1.south) {\normalsize $(3)$};
			
			\draw[dashed, very thick] (P1.center) -- (-2,4.1);  \node at (-2.1,4.6) {\Large $\widehat{L}_1$};
			\draw[dashed, very thick] (-2,-1.5) -- (-2,-2.85);
			
			\node[text=colsingtext] (P2) at (-0.5,1.1) {\Large $\circ$};
			\node[xshift=15, text=colsingtext] at (P2.north) {\Large $P_2$};
			\node[yshift=-5, text=colsingtext] at (P2.south) {\normalsize $(3)$};
			
			\draw[dashed, very thick] (P2.center) -- (-0.5,4.4);   \node at (-0.6,4.9) {\Large $\widehat{L}_2$};
			\draw[dashed, very thick] (-0.5,-1.5) -- (-0.5,-2.5);
			
			\node[text=colsingtext] (R1) at (-1.5,-0.6) {\Large $\times$};
			\node[xshift=-7, text=colsingtext] at (R1.west) {\Large $R_1$};
			\node[xshift=0,yshift=-5, text=colsingtext] at (R1.south) {\normalsize $(3)$};
			
			\node[text=colsingtext] (R2) at (0.5,-0.2) {\Large $\times$};
			\node[xshift=-7, text=colsingtext] at (R2.west) {\Large $R_2$};
			\node[xshift=0,yshift=-5, text=colsingtext] at (R2.south) {\normalsize $(3)$};
			
			\node[right] at (2.5,2.75) {\Large $E \Big(\frac{3N}{\beta},\frac{3\nu}{\beta}\Big)\simeq \PP^2/G$};
		\end{scope}
		
		\end{tikzpicture}
	}
	\caption{The blowing-up $\pi: Y \to X$ and the singularities of $X$ and $Y$.
	} \label{fig:blowup}
\end{figure}
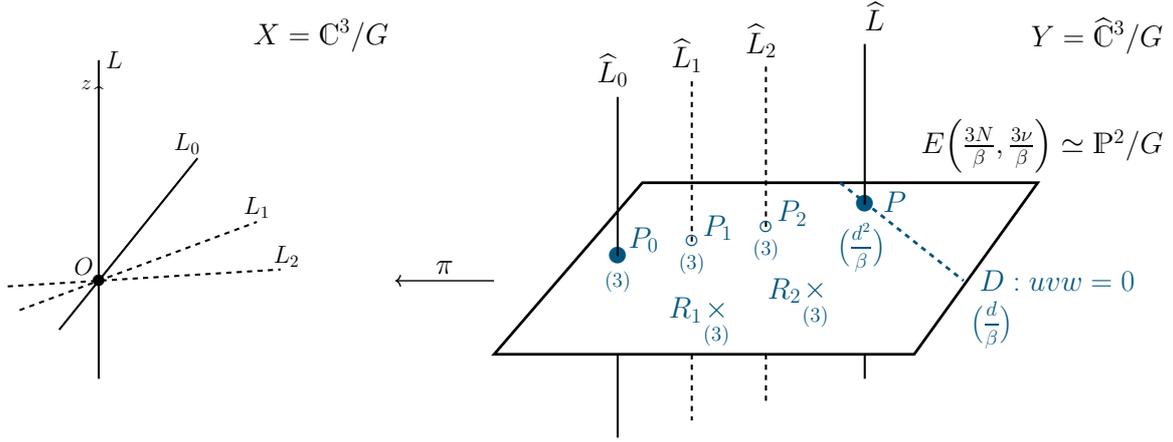

\subsection{Local study around the singular points and stratification}\mbox{}

It remains to determine the type of quotient singularity of each of the previous singular loci. The local study is performed
using the classical charts in $\widehat{\CC}^3$ and centering the origin at each of the points $(\CC^3/G_Q,0)\to (Y,Q)$.

In the case of $P$, this point is naturally the origin of one of the charts on the resolution space. Via the usual chart
$(x,y,z)\mapsto\left((x,xy,xz), (1:y:z)\right)$, one obtains that the action is linear and given by an abelian group of type
$$
\left(\begin{array}{c|ccc}
	d & 0 & 1 & q \\ d & q & 0 & q^2-q+1 \\
\end{array}\right).
$$
The underlying group is not small. In order to obtain a small group we use the isomorphism
$[(x, y, z)]\mapsto[(x^\beta, y, z)]$, giving
\begin{equation}\label{eqn:P_small}
	\left(\begin{array}{c|ccc}
		d & 0 & 1 & q\\ \frac{d}{\beta} & q & 0 & \frac{q^2-q+1}{\beta}\\
	\end{array}\right).
\end{equation}
This action is small and $(Y,P) \simeq \left(\CC^3/(C_{d}\times C_{d/\beta}), 0\right)$. In these new coordinates, the blowing-up is locally
described by $[(x^\beta, y, z)]\mapsto[(x,x y,x z)]$, with $K_{\pi}=\left(\frac{3}{\beta}-1\right) E$ giving
\[
	\pi^*D_1: x^{\frac{3N}{\beta}}y^Nz^N=0\quad\text{and}\quad \pi^*D_2+K_{\pi}: x^{\frac{3\nu}{\beta}-1}y^{\nu-1}z^{\nu-1}=0.
\]
Note that this is the only point in $Y$ where the action is not given by a cyclic group. For a point $Q\in D\setminus\set{P}$,
one sees that the action (\ref{eqn:P_small}) becomes of type $\left(\frac{d}{\beta}; q, 0, \frac{q^2-q+1}{\beta}\right)$.

Nevertheless, when we need to perform a translation to study other points, the action by the respective isotropy groups becomes non-linear. This issue is resolved by an adequate local analytic change of variables.

In the case of the points $R_k$, the induced isotropy group is $G_{R_k}=\{\xi^{i\frac{d}{\beta}}B^j\}_{i,j\in\ZZ}$.
In the new coordinates $(x,y,z)$ centered at $R_k$, the action becomes
\[
	(\xi^{i\frac{d}{\beta}}B^j)\cdot(x,y,z)=\left\lbrace\begin{array}{lcl}
		\displaystyle \left(\xi^{i\frac{d}{\beta}}x,y,z\right), & & \text{if }j\equiv0\mod 3, \\[1em]
		\displaystyle \left(\xi^{i\frac{d}{\beta}}x(y+\varepsilon^{k}),\frac{z-\varepsilon^{k}y}{y+\varepsilon^{k}},\frac{-\varepsilon^{2k}y}{y+\varepsilon^{k}}\right), & & \text{if }j\equiv1\mod 3, \\[1em]
		\displaystyle
		\left(\xi^{i\frac{d}{\beta}}x(z+\varepsilon^{2k}),\frac{-\varepsilon^{k}z}{z+\varepsilon^{2k}},\frac{y-\varepsilon^{2k}z}{z+\varepsilon^{2k}}\right), & & \text{if }j\equiv2\mod 3.
	\end{array}\right.
\]
One can check that the local analytic isomorphism $\varphi:(\CC^3,0)\to (\CC^3,0)$ given by $\varphi(x,y,z)=\left(xe^{y+z}, \varepsilon^{k}\big(e^{(\varepsilon-1)y+(\varepsilon^2-1)z}-1\big), \varepsilon^{2k}\big(e^{(\varepsilon^2-1)y+(\varepsilon-1)z}-1\big)\right)$
linearizes the action in $(\CC^3/G_{R_k},0)$, where the previous non-linear action of $\xi^{i\frac{d}{\beta}} B^j$
is transformed into multiplication by $\diag(\xi^{i\frac{d}{\beta}} \varepsilon^{kj},\varepsilon^j,\varepsilon^{2j})$.
Hence it gives a quotient singularity where the group is not small of type
$$
\left(\begin{array}{c|ccc}
d & \frac{d}{\beta} & 0 & 0\\ 3 & k & 1 & 2
\end{array}\right)
= \left(\begin{array}{c|ccc}
\beta & 1 & 0 & 0\\ 3 & k & 1 & 2
\end{array}\right)
\simeq \left(3 ;~k\beta,~1,~2\right),
$$
where the last morphism comes again form the isomorphism $[(x, y, z)]\mapsto[(x^\beta, y, z)]$. The strict transforms are expressed locally as
\[
	\pi^*D_1: x^{\frac{3N}{\beta}}u(y,z)=0\quad\text{and}\quad \pi^*D_2+K_{\pi}: x^{\frac{3\nu}{\beta}-1}v(y,z)=0,
\]
where $u(y,z)$ and $v(y,z)$ are units in the local ring. Similar arguments can be applied for the rest of the points.

Take the stratification $Y \cap E = \bigsqcup_{\ell \geq 0} Y_{\ell}$
of Theorem~\ref{thm:intro_main2}, where $Y_0 = E \setminus \left(  D\cup \bigcup_k Q_k \right)$ is the biggest stratum and
\[
	Q_i=\left\lbrace\begin{array}{lcl}
		R_k = \big(1:\varepsilon^k:\varepsilon^{2k}\big), & & \text{if } 3\nmid d, \\[0.5em] %
		P_k = \big(1:1:\xi^{k\frac{q^2-q+1}{3}}\big), & & \text{if } 3\mid d.
	\end{array}\right.
\]
We summarize the latter in the following table.
\[
	\begin{array}{l|c|c|c|c}
	\multicolumn{1}{c|}{\text{Stratum}} & \text{Class} & \Nbf_{\ell} & \nubf_{\ell} & G_{\ell} \\
	\hline &&&&\\[-0.35cm]
	Y_0 & [E] - [D] -3 & \left(\frac{3N}{\beta},0,0\right) & \left(\frac{3\nu}{\beta},1,1\right) & (1;0,0,0) \\[0.3cm]
	Y_1 = D \setminus P & [D] - 1 & \left(\frac{3N}{\beta},0,N\right) & \left(\frac{3\nu}{\beta}, 1, \nu\right) & \left(\frac{d}{\beta}; q, 0, \frac{q^2-q+1}{\beta}\right) \\[0.65em]
	Y_2 = P & 1 & \left(\frac{3N}{\beta},N,N\right) & \left(\frac{3\nu}{\beta}, \nu, \nu\right)  & \left(\begin{array}{c|ccc}
		d & 0 & 1 & q\\ \frac{d}{\beta} & q & 0 & \frac{q^2-q+1}{\beta}\\
	\end{array}\right) \\[1em]
	\hline
	\hspace*{-1ex} \begin{array}{l} Y_{3+k} = Q_k \\ k=0,1,2 \end{array}
	 & 1 & \left(\frac{3N}{\beta},0,0\right) & \left(\frac{3\nu}{\beta},1,1\right) & \begin{array}{ll} \\[-0.25cm]
			\text{if $3 \nmid d$:} &  (3; k\beta, 1, 2) \\[0.5em]
			\text{if $3 \mid d$:} &  \left(3; -k \frac{q+1}{\alpha} \gamma, 1, 2\right)
		\end{array}
	\end{array}
\]
In conclusion, $\pi:Y=\widehat{\CC}^3/G\to X$ is an embedded $\QQ$-resolution of singularities of the pair $(X,D_1+D_2)$, which is pictured in Figure~\ref{fig:blowup}.


\subsection{Computation of the zeta functions and McKay correspondence}\mbox{}

Applying Theorem~\ref{thm:intro_main2} to compute $\Zmot_{,0}(D_1,D_2; s)$ using the previous stratification,
one obtains
\begin{equation}\label{eq:ZmotGdq}
\Zmot_{,0}(D_1,D_2; s)=\LL^{-3} \frac{(\LL-1) \LL^{-\frac{3}{\beta}(N s+\nu)}}{1-\LL^{-\frac{3}{\beta}(N s+\nu)}}
\cdot Z(s),
\end{equation}
where
\begin{equation*}\label{eq:ZmotGdq_exp}
\begin{aligned}
Z(s) & =  [E] - [D] -3 + S_{G_{3}}(\Nbf_3,\nubf_3;s)
+ S_{G_{4}}(\Nbf_4,\nubf_4;s) + S_{G_{5}}(\Nbf_5,\nubf_5;s) \\
& \quad + \frac{(\LL-1) \LL^{-(N s+\nu)}}{1-\LL^{-(N s+\nu)}}
\left( \left([D]-1\right)S_{G_1}(\Nbf_1,\nubf_1;s)
+ S_{G_2}(\Nbf_2,\nubf_2;s) \frac{(\LL-1) \LL^{-(N s+\nu)}}{1-\LL^{-(N s+\nu)}} \right).
\end{aligned}
\end{equation*}
For $\ell \neq 2$ the sum $S_{G_{\ell}}(\Nbf_\ell,\nubf_\ell;s)$ is computed as in~\eqref{eqn:SG_dim3},
since $G_\ell$ is cyclic. If $d \neq \beta$, then the group $G_2$ is noncyclic. Nevertheless,
$G_{2} = \{ M_{i,j} \}_{i,j}$ where the matrices $M_{i,j}$ can be written in terms of $\xi$ as
$$
M_{i,j}=\diag\left(\xi^{j\beta q}, \xi^{i},\xi^{iq+j(q^2-q+1)}\right),
\quad \begin{array}{ll} i=0,\ldots, d-1, \\ j=0,\ldots, d/\beta-1.\end{array}
$$
Strictly speaking one has to choose a $d^2/\beta$th root of unity to calculate $S_{G_2}(\Nbf_2,\nubf_2;s)$
in Theorem~\ref{thm:intro_main1}. Since all the terms appearing in $M_{i,j}$ above are $d$th roots of
unity, one gets
\[
S_{G_2}(\Nbf_2,\nubf_2;s) = \sum_{i=0}^{d-1}\sum_{j=0}^{d/\beta-1} \LL^{\frac{Ns+\nu}{d}
\left( \frac{3}{\beta} \overline{j\beta q} + \overline{\,i\,}  + \overline{(iq+j(q^2-q+1))} \right)},
\]
where $\overline{a}$ stands for the class of $a$ modulo $d$ satisfying $0 \leq \overline{a} \leq d-1$.

Specializing by the Euler characteristic in (\ref{eq:ZmotGdq}), we obtain the local topological zeta function.
Since $E \simeq \PP^2/G$ and $D$ is the quotient under $G$ of the three coordinate axes of~$\PP^2$,
one can employ the formula $\chi(S/G)=(1/|G|)\sum_{g\in G} S^g$, where $S^g$ is the set of points in $S$ fixed
by $g$, to compute its Euler characteristics. Using the discussion of Section~\ref{sec:blowing-up-Gdq},
one checks that $\chi(E) = 3$ and $\chi(D)=1$. Therefore
\begin{align*}
\Ztop_{,0}(D_1,D_2; s) & = \frac{\beta/3}{Ns+\nu}\left(8 + \frac{d^2/\beta}{(Ns+\nu)^2}\right)
= \frac{d^2+8\beta(Ns+\nu)^2}{3(Ns+\nu)^3}.
\end{align*}

The topological zeta function codifies the information of the so-called (local)
stringy Euler number $e_{\text{st},0}(X)$, obtained by substituting $s=0$ (or $N=0$) and $\nu=1$ in $\Ztop_{,0}(D_1,D_2; s)$,
\[
e_{\text{st},0}(X)=\frac{d^2+8\beta}{3}.
\]
By \cite[Theorem~3.6]{DL02}, cf.~equation~\eqref{eq:orb-measure} in Remark~\ref{rk:comparison-DL},
this is precisely the number of conjugacy classes of $G$, since $e_{\text{st},0}(X) = \chi(\mu^{\Gor}(\L(X)_0)) \in \ZZ$.
This formula was pointed out by Ito~\cite[Section~5]{Ito94} for the Gorenstein case, i.e.~$q=d-1$.
Moreover, she found a crepant resolution $h:\widetilde{X} \to X$ (i.e., with $K_h=0$) and proved the McKay correspondence,
that is, $\chi(\widetilde{X})$ is the number of conjugacy classes of $G$.
Note that our approach also implies the correspondence, since the group is small and from the change of variables formula as well as the fact that
$\chi(X\setminus O)=0$, it follows that $\chi(\widetilde{X}) =
\chi(\mu^{\Gor}(\L(X)^{\reg})) =
 \chi(\mu^{\Gor}(\L(X)_0^{\reg}))=e_{\text{st},0}(X)$.

\providecommand{\bysame}{\leavevmode\hbox to3em{\hrulefill}\thinspace}

\end{document}